	\newtheorem{thm}{Theorem}[section]
  	\newtheorem{cor}{Corollary}[section]
  	\newtheorem{prop}{Proposition}[section]
	\theoremstyle{definition}
	\newcommand{\M}{\mathcal{M}}
	\newcommand{\Mbar}{\overline{\mathcal{M}}}
\begin{document}

\title{Double total ramifications for curves of genus $2$}
\author{Nicola Tarasca}
\email{tarasca@math.utah.edu}
\address{University of Utah, Department of Mathematics, 155 S 1400 E, Salt Lake City, UT 84112, USA}
\subjclass[2010]{14H99 (primary), 14C99  (secondary)}
\keywords{Double ramification cycles, Hurwitz theory, pseudo-effective cones}

\begin{abstract}
Inside the moduli space of curves of genus $2$ with $2$ marked points, we consider the loci of curves admitting a map to $\mathbb{P}^1$ of degree $d$ totally ramified over the two marked points, for $d\geq 2$. Such loci have codimension two. We compute the class of their closures in the moduli space of stable curves. Several results will be deduced from this computation. 
\end{abstract}

\maketitle

A classical way of producing subvarieties of moduli spaces of curves is by means of Hurwitz theory. Let $\M_g$ be the moduli space of smooth curves of genus $g$. If $1<d<(g+2)/2$, the locus of curves of genus $g$ admitting a map to $\mathbb{P}^1$ of degree $d$ has codimension $g-2d+2$ inside $\M_g$ (\cite{MR626954}).

From such loci, one can produce subvarieties of the moduli spaces $\M_{g,n}$ of smooth $n$-pointed curves of genus $g$ by imposing exceptional ramifications at the marked points. An important example is the {\it double ramification locus}, that is, the locus of curves admitting a map to $\mathbb{P}^1$ with prescribed ramification profile over two points and simple ramifications elsewhere. 
Such a locus has codimension $g$ in $\M_{g,n}$.
 
Another description is the following. Fix a multi-index $\underline{d}=(d_1,\dots,d_n)\in \mathbb{Z}^n$ of degree $0$, that is, $\sum_i d_i=0$.
Let $\mathcal{J}_{g,n}\rightarrow \M_{g,n}$ be the universal Jacobian variety over $\M_{g,n}$ and consider the section $\varphi_{\underline{d}}\colon \M_{g,n}\rightarrow \mathcal{J}_{g,n}$ defined by $\varphi_{\underline{d}}([C,p_1,\dots,p_n])=\mathcal{O}_C(\sum_{i} d_i p_i)$. 
The double ramification locus $\mathcal{DR}_g(\underline{d})$ in $\M_{g,n}$ is 
the pull-back of the zero section $\mathcal{Z}_g$ in $\mathcal{J}_{g,n}$ via $\varphi_{\underline{d}}$.

Double ramification loci have played a crucial role in the study of topological recursive relations in \cite{MR1908062}, and in the proof of the $r$-spin Witten conjecture in \cite{MR2722511}. 
The problem of computing the classes of closures of double ramification loci in (possibly partial) compactifications of $\M_{g,n}$ is generally known as the Eliashberg's problem, and its interest is heightened by applications in symplectic field theory. 

The universal Jacobian variety $\mathcal{J}_{g,n}$ over $\M_{g,n}$ can be extended over the moduli space $\M_{g,n}^{ct}$ of stable curves of compact type. Given a multi-index $\underline{d}\in\mathbb{Z}^n$ of degree $0$, Hain computed the class in $H^{2g}(\M_{g,n}^{ct},\mathbb{Q})$ of the pull-back of the zero section ${\mathcal{Z}}^{ct}_g$ via the map $\varphi_{\underline{d}}\colon \M_{g,n}^{ct}\rightarrow \mathcal{J}_{g,n}^{ct}$ using normal functions (\cite{Hain}).

Moreover, Hain's formula holds in the Chow group $A^g(\M_{g,n}^{ct})$ (see for instance \cite{GZ1}), and Grushevsky and Zakharov have extended it over the open subset of $\Mbar_{g,n}$ parametrizing curves with at most one non-disconnecting node (\cite{MR3189435}).

Note however that the pull-back of the zero section of the universal Jacobian variety $\mathcal{J}_{g,n}^{ct}\rightarrow \M_{g,n}^{ct}$ is not irreducible and contains other components besides the closure of the double ramification locus. 

This phenomenon has been investigated in genus $1$ in \cite{MR3231020}. For $\underline{d}\in\mathbb{Z}^n$ such that all $d_i$ are non-zero, one has
\[
\left[ \varphi_{\underline{d}}^* \overline{\mathcal{Z}}_1\right] = \left[\overline{\mathcal{DR}}_1(\underline{d})\right] +\delta_{0,\{1,\dots,n\}}\,\,\in\textrm{Pic}_\mathbb{Q}(\Mbar_{1,n}),
\]
where $\delta_{0,\{1,\dots,n\}}$ is the class of the divisor of curves with a rational tail containing all marked points.

The study of the closure of Hurwitz loci in the moduli space of stable pointed curves $\Mbar_{g,n}$ can be carried out using the theory of admissible covers (\cite{MR664324}). For instance, in $\Mbar_g$ explicit expressions are known for the classes of the closures of Hurwitz divisors (\cite{MR664324}), and Hurwitz loci of codimension two (\cite{MR3109733}).

In this paper, we use the theory of admissible covers to study the closure of double ramification loci, considered for $g=1$ in \cite{MR3231020}, in the next non-trivial case, that is, the case $g=n=2$.

Let ${\mathcal{DR}}_2(d)$ be the locus of pointed curves $[C,p_1,p_2]$ in $\M_{2,2}$ admitting a map to $\mathbb{P}^1$ of degree $d$ totally ramified at $p_1$ and at $p_2$. In other words,
\[
{\mathcal{DR}}_2(d) = \{[C,p_1,p_2]\in\M_{2,2} :  dp_1\equiv dp_2 \in\textrm{Pic}^d(C)\}.
\]
We obtain an explicit formula for the class of its closure in $\Mbar_{2,2}$.

\begin{thm}
\label{ACC}
For $d\geq 2$, the class of $\overline{\mathcal{DR}}_2(d)$ in $A^2(\Mbar_{2,2})$ is
\begin{multline*}
\left[\overline{\mathcal{DR}}_2(d)\right] = (d^2-1)\Bigg[
\frac{d^2}{2}\psi_1\psi_2 +\frac{2-d^2}{4} \left(\psi_1^2+\psi_2^2\right)\\
+\left(\psi_1+\psi_2\right)\left(-\frac{3d^2+2}{20} \delta_{1,1}
+\frac{d^2-6}{10}\left(\delta_{1,2}+\frac{1}{12}\delta_0 \right)\right)
\Bigg].
\end{multline*}
\end{thm}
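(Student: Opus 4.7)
The plan is to realize $\overline{\mathcal{DR}}_2(d)$ as the image in $\Mbar_{2,2}$ of a Harris--Mumford space of admissible covers, and then to pin down its class in the tautological ring by intersection against sufficiently many test cycles. By Riemann--Hurwitz, a generic cover $C \to \mathbb{P}^1$ of degree $d$ with $g(C)=2$ and total ramification at two marked points has exactly four further simple branch points. Let $\mathcal{H}_d$ denote the moduli space of admissible covers of this type, and let $\mu \colon \mathcal{H}_d \to \Mbar_{2,2}$ be the map recording the source curve with its two totally ramified marked points. A dimension count gives $\dim \mathcal{H}_d = 3 = \dim \overline{\mathcal{DR}}_2(d)$, so $\mu$ is proper and generically finite, and $\mu_*[\mathcal{H}_d] = N \cdot [\overline{\mathcal{DR}}_2(d)]$ for a positive integer $N$ determined by the generic monodromy.

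Next, on the compact-type locus, Hain's formula expresses $\varphi_{\underline{d}}^*[\overline{\mathcal{Z}}_2^{ct}]$ as an explicit quadratic polynomial in $\psi_1, \psi_2, \delta_{1,1}, \delta_{1,2}$. In direct analogy with the genus-one analysis of \cite{MR3231020}, this pullback decomposes as $[\overline{\mathcal{DR}}_2(d)]$ restricted to compact type, plus excess contributions supported on boundary strata where the condition $dp_1 \equiv dp_2$ holds automatically on one component---for instance, on the stratum where both markings lie on an elliptic tail attached to a genus-$1$ component, or where a single marking lies on an elliptic tail glued at a $d$-torsion translate. Identifying and subtracting these excess components from Hain's expression determines the coefficients of $\psi_i \psi_j$ and of $\psi_i \delta_{1,1}$, $\psi_i \delta_{1,2}$; the $p_1 \leftrightarrow p_2$ symmetry then enforces equal coefficients for $\psi_1^2$ and $\psi_2^2$.

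The remaining $\psi_i \delta_0$ coefficient lies outside compact type and must be obtained by direct admissible-cover calculation. For this I would take a test surface sweeping the interior of $\delta_0$---most naturally, a family of $2$-pointed irreducible genus-$1$ curves with a varying non-separating node---and count admissible covers over each member. Such a cover exists precisely when the degree-$d$ DR condition on the normalization is met with a consistent ramification profile at the two preimages of the target node, and each contributes with multiplicity equal to the product of ramification indices over the target nodes. Paralleling this with admissible-cover counts over test surfaces inside $\delta_{1,1}$ and $\delta_{1,2}$ supplies independent cross-checks, and comparing the assembled intersection numbers against the tautological ansatz produces a linear system whose unique solution is the formula stated in the theorem.

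The main technical obstacle is the boundary enumeration of admissible covers. For each relevant codimension-two boundary stratum and each way of distributing the four simple branch points among the components of the target tree, one must list the admissible ramification profiles, weight by ramification multiplicities at nodes, and account for automorphisms of both covers and targets. A closely related subtlety, which is forced once the enumeration is complete, is verifying that every pure product $\delta_\Gamma \cdot \delta_{\Gamma'}$ of boundary divisors contributes with zero coefficient to the final class; the unusual compactness of the formula rests on this cancellation, and confirming it directly requires the full boundary analysis.
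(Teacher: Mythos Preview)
Your overall shape---admissible covers plus test surfaces against a tautological ansatz---is right, but the way you organize the computation has a real gap, and it differs substantially from what the paper does.

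The paper does \emph{not} use Hain's formula as input. Instead it writes the class in a 14-term basis of $R^2(\Mbar_{2,2})$ (products of divisor classes), then assembles 16 linear relations: ten from explicit test surfaces (several entirely inside the boundary, with admissible-cover counts that are mostly zero), three from the $p_1\leftrightarrow p_2$ symmetry, and three from computing the push-forward $(\pi_i)_*[\overline{\mathcal{DR}}_2(d)]\in\mathrm{Pic}(\Mbar_{2,1})$ via Diaz's exceptional-Weierstrass divisor. The system has rank 14 and the formula drops out. The comparison with Hain (Corollary~\ref{Hain-AC}) is then a \emph{consequence}, not a step in the proof.

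Your plan inverts this: extract the compact-type part of the class by subtracting ``excess components'' from Hain's expression, then patch in the $\delta_0$-coefficient with one test surface. The problem is the subtraction. The correction (Corollary~\ref{Hain-AC}) is
\[
\left[\varphi^*_{(d,-d)}\mathcal{Z}^{ct}_2\right]-\left[\overline{\mathcal{DR}}_2(d)\right]=\delta_{22}+(2d^2-1)\,\delta_{11|}+\Bigl(d^2-\tfrac{6}{5}\Bigr)\delta_{11|12},
\]
and those $d$-dependent multiplicities are not obtained by ``identifying'' the support; they come from a genuine excess-intersection calculation (or, as the paper does, from already knowing the DR class). In genus~1 the excess is a single stratum with coefficient~1, which is why the analogy is misleading. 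Unless you explain how to compute $(2d^2-1)$ and $(d^2-\tfrac{6}{5})$ independently, this step is circular.

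There is a second, more bookkeeping-level gap. Restriction to compact type lands in $R^2(\mathcal{M}^{ct}_{2,2})$, which is only 5-dimensional, while the ansatz in $R^2(\Mbar_{2,2})$ has 14 unknowns, including $\delta_2^2,\ \delta_{1,2}\delta_2,\ \delta_0\delta_2,\ \delta_0\delta_{1,1},\ \delta_0\delta_{1,2},\ \delta_0^2$. Knowing the compact-type restriction and performing one test-surface intersection inside $\delta_0$ does not come close to determining these; the vanishing of the pure boundary products in the final formula is an \emph{output} of the full linear system, not something you can assume or verify after the fact. The paper needs all ten test surfaces plus the push-forward to $\Mbar_{2,1}$ to close the system; your proposal, as stated, is underdetermined.
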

\noindent Throughout, we use the same notation for divisor classes and codimension-two classes of $\Mbar_{2,2}$ as in \cite{MR1672112}. Note that the Picard group of $\Mbar_{2,2}$ is generated by the following classes: $\psi_1$, $\psi_2$ are the cotangent line classes at the two marked points; $\delta_0$ is the class of the closure of the divisor $\Delta_0$ of nodal irreducible curves; $\delta_2$ is the class of the closure of the divisor $\Delta_2$, whose general element has a component of genus $2$ meeting transversally a rational component with the two marked points; $\delta_{1,1}$ is the class of the closure of the divisor $\Delta_{1,1}$ consisting of curves having two elliptic components meeting transversally, each with a marked point; finally $\delta_{1,2}$ is the class of the closures of the divisor $\Delta_{1,2}$ of  curves consisting of two elliptic components meeting transversally, with the two markings on just one component.

By \cite{MR2120989}, the class of $\overline{\mathcal{DR}}_2(d)$ lies in the tautological group $R^2(\Mbar_{2,2})\subset A^2(\Mbar_{2,2})$. By \cite{MR1672112}, the group $R^2(\Mbar_{2,2})$ is generated by product of divisor classes. In order to determine the coefficients of the class of $\overline{\mathcal{DR}}_2(d)$, we combine relations coming from test surfaces (\S \ref{testsurfaces}), symmetries of $\overline{\mathcal{DR}}_2(d)$ (\S \ref{symmetry}), and the push-forward to $\Mbar_{2,1}$ (\S\S \ref{div} and \ref{push}).

As a first consequence of the explicit expression of the class of $\overline{\mathcal{DR}}_2(d)$, we have the following result, shown in \S \ref{codim2cone}.

\begin{cor}
\label{CI}
The locus $\overline{\mathcal{DR}}_2(d)$ is not a complete intersection in $\Mbar_{2,2}$.
\end{cor}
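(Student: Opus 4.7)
The plan is to argue by contradiction. Suppose $\overline{\mathcal{DR}}_2(d)$ were a scheme-theoretic complete intersection $D_1 \cap D_2$ of two divisors in $\Mbar_{2,2}$; then its fundamental class would factor as
\[
\left[\overline{\mathcal{DR}}_2(d)\right] = [D_1] \cdot [D_2] \qquad \text{in } A^2(\Mbar_{2,2}).
\]
I would expand each divisor class in the basis of $\textrm{Pic}_{\mathbb{Q}}(\Mbar_{2,2})$:
\[
[D_j] \,=\, a_j \psi_1 + b_j \psi_2 + c_j \delta_0 + e_j \delta_{1,1} + f_j \delta_{1,2} + g_j \delta_2, \qquad j=1,2,
\]
with $a_j, b_j, c_j, e_j, f_j, g_j \in \mathbb{Q}$, and compare the product $[D_1]\cdot[D_2]$ with the formula of Theorem \ref{ACC}.

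Matching the coefficients of the three monomials $\psi_1^2$, $\psi_2^2$, and $\psi_1 \psi_2$ gives the system
\[
a_1 a_2 \,=\, b_1 b_2 \,=\, \tfrac{(d^2-1)(2-d^2)}{4}, \qquad a_1 b_2 + a_2 b_1 \,=\, \tfrac{(d^2-1)d^2}{2}.
\]
The algebraic identity $(a_1 b_2 - a_2 b_1)^2 = (a_1 b_2 + a_2 b_1)^2 - 4(a_1 a_2)(b_1 b_2)$, combined with $d^4 - (d^2-2)^2 = 4(d^2-1)$, then yields
\[
(a_1 b_2 - a_2 b_1)^2 \,=\, (d^2-1)^3.
\]
For $d \geq 2$ one has $(d-1)^2 < d^2-1 < d^2$, so $d^2-1$ lies strictly between two consecutive squares and is not itself a perfect square; consequently $(d^2-1)^3$ is not a rational square, contradicting the rationality of $a_1 b_2 - a_2 b_1$. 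No factorization $[\overline{\mathcal{DR}}_2(d)] = [D_1]\cdot[D_2]$ can therefore exist.

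The main obstacle is justifying that the three coefficient identities above genuinely hold in $\mathbb{Q}$, rather than being valid only modulo the kernel of the surjection $\textrm{Sym}^2(\textrm{Pic}_{\mathbb{Q}}) \to R^2(\Mbar_{2,2})$. I expect this to be the only nontrivial input from the paper: it can be handled either by choosing a basis of $R^2(\Mbar_{2,2})$ that includes $\psi_1^2$, $\psi_2^2$ and $\psi_1\psi_2$ (using the description of $R^2$ from \cite{MR1672112}), or equivalently by restricting both sides to a test surface on which the boundary divisor classes vanish, so that only the $\psi$-contribution survives and the three identities can be read off unambiguously.
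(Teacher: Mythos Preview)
Your approach has a real gap that is not fixable by either of the methods you suggest. You want the three identities
\[
a_1a_2=\frac{(d^2-1)(2-d^2)}{4},\qquad b_1b_2=\frac{(d^2-1)(2-d^2)}{4},\qquad a_1b_2+a_2b_1=\frac{(d^2-1)d^2}{2}
\]
to hold as numerical equalities. But only two $\mathbb{Q}$-linear functionals on $R^2(\Mbar_{2,2})$ are available here: the last Getzler relation
\[
(\psi_1-\psi_2)\bigl(10\psi_1+10\psi_2-2\delta_{1,1}-12\delta_{1,2}-\delta_0\bigr)=0
\]
contains $\psi_1^2-\psi_2^2$, so no linear functional on $R^2(\Mbar_{2,2})$ can return the $\psi_1^2$-coefficient of a product of divisor classes separately from its $\psi_2^2$-coefficient. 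Concretely, any basis of $R^2(\Mbar_{2,2})$ including $\psi_1^2,\psi_2^2,\psi_1\psi_2$ must drop one of the other monomials appearing in that relation, and the reduction of that dropped monomial reintroduces $\psi_1^2$ and $\psi_2^2$ with opposite signs; so the basis coefficient of $\psi_1^2$ in $[D_1]\cdot[D_2]$ is $a_1a_2$ plus terms involving $c_j,e_j,f_j$, not $a_1a_2$ alone. The test-surface route fails for the same reason: on any surface $S$ where all boundary classes restrict to zero, the relation forces $\psi_1^2|_S=\psi_2^2|_S$, so you cannot separate $a_1a_2$ from $b_1b_2$. In fact the two equations you \emph{can} legitimately extract,
\[
a_1a_2+b_1b_2=\frac{(d^2-1)(2-d^2)}{2},\qquad a_1b_2+a_2b_1=\frac{(d^2-1)d^2}{2},
\]
do admit rational solutions (for example $a_1=d^2/2$, $b_1=(2-d^2)/2$, $a_2=0$, $b_2=d^2-1$), so your irrationality argument cannot conclude from these alone.

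The paper's proof sidesteps this entirely by using the effectivity of $D_1$ and $D_2$: for an effective divisor on $\Mbar_{2,2}$ whose support meets $\M_{2,2}$, the $\psi_1$- and $\psi_2$-coefficients are nonnegative. Then the coefficient of $\psi_1^2+\psi_2^2$ in the chosen basis of $R^2(\Mbar_{2,2})$, namely $\tfrac{1}{2}(a_1a_2+b_1b_2)$, is automatically nonnegative, contradicting the value $\tfrac{(d^2-1)(2-d^2)}{4}<0$ from Theorem~\ref{ACC}. In other words, the paper uses only the single well-defined functional you do have access to, together with a sign constraint coming from geometry rather than from arithmetic.
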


This last result shows that the class of $\overline{\mathcal{DR}}_2(d)$ cannot be expressed as a product of two effective divisor classes. On the other hand, we recall that 
\[
 \left[\varphi^*_{(d,-d)}{\mathcal{Z}}^{ct}_2\right] = \frac{1}{2}\left[\varphi_{(d,-d)}^*\mathcal{T} \right]^2 \in A^2(\M^{ct}_{2,2}),
\]
where $\mathcal{T}$ is the universal symmetric theta divisor trivialized along the zero section (see for instance \cite{GZ1}). 

As we have remarked, the restriction of $\overline{\mathcal{DR}}_2(d)$ to the locus of curves of compact type is one of the components of the pull-back of the zero section ${\mathcal{Z}}^{ct}_2$ via $\varphi_{(d,-d)}$.
Using the computation of the class $\varphi_{(d,-d)}^*\mathcal{T}$ from \cite{Hain} and \cite{GZ1}, we deduce in \S \ref{Hain} the following.

\begin{cor}
\label{Hain-AC}
We have the following equality in $A^2(\mathcal{M}^{ct}_{2,2})$
\begin{eqnarray}
\label{HAC}
\left[\varphi^*_{(d,-d)}{\mathcal{Z}}^{ct}_2\right] = \left[\overline{\mathcal{DR}}_2(d) \right]+ \delta_{22}+(2d^2-1)\delta_{11|}+ \left(d^2-\frac{6}{5}\right)\delta_{11|12}.
\end{eqnarray}
\end{cor}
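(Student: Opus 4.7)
The plan is to compute both sides of \eqref{HAC} independently as classes in $A^2(\mathcal{M}^{ct}_{2,2})$ and to match them by expanding in a common basis.

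First, I would apply Hain's formula, recalled in \cite{Hain} and \cite{GZ1}, to write the divisor $D := \varphi^*_{(d,-d)}\mathcal{T}$ as an explicit $\mathbb{Q}$-linear combination of $\psi_1$, $\psi_2$, and the boundary divisor classes of $\mathcal{M}^{ct}_{2,2}$, with coefficients polynomial (in fact quadratic) in $d$. The identity recalled in the excerpt then gives $[\varphi^*_{(d,-d)}\mathcal{Z}^{ct}_2] = \tfrac{1}{2}D^2$, so the next step is to square $D$ and expand the result in a convenient basis of $A^2(\mathcal{M}^{ct}_{2,2})$. This requires computing all products $\psi_i\cdot\delta_\bullet$ and $\delta_\bullet \cdot \delta_{\bullet'}$ (including self-intersections) using the standard normal-bundle descriptions of the boundary divisors, in which a node contributes the opposite of the sum of the $\psi$-classes on the two branches, together with the codimension-two tautological relations of \cite{MR1672112}.

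Next, I would restrict the formula of Theorem \ref{ACC} for $[\overline{\mathcal{DR}}_2(d)]$ to $\mathcal{M}^{ct}_{2,2}$. The support of $\overline{\Delta}_0$ is disjoint from the compact-type locus, so the $\delta_0$ term drops out upon restriction, leaving only the terms involving $\psi_1^2$, $\psi_2^2$, $\psi_1\psi_2$, and the mixed products $\psi_i\delta_{1,j}$.

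Finally, I would form the difference $\tfrac{1}{2}D^2 - [\overline{\mathcal{DR}}_2(d)]|_{\mathcal{M}^{ct}_{2,2}}$. Since both classes agree away from the boundary (both restrict to the double ramification locus on $\mathcal{M}_{2,2}$), the difference is supported on $\partial \mathcal{M}^{ct}_{2,2}$, hence expressible as a $\mathbb{Q}$-linear combination of the codimension-two boundary strata $\delta_{22}$, $\delta_{11|}$, $\delta_{11|12}$. Comparing coefficients, after reducing both sides to a common basis, yields the claimed equality \eqref{HAC}. The main obstacle is bookkeeping rather than conceptual: squaring the Hain divisor generates many terms $\psi_i\delta_\bullet$ and $\delta_\bullet\delta_{\bullet'}$ that must be consistently rewritten in the chosen basis of $R^2(\mathcal{M}^{ct}_{2,2})$, and the final identification of the three coefficients $1$, $2d^2-1$, and $d^2-\tfrac{6}{5}$ requires careful tracking of the normal-bundle contributions and of how each divisor product meets the three boundary strata on the right-hand side.
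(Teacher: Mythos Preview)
Your proposal follows essentially the same route as the paper: write $\tfrac{1}{2}D^2$ via Hain's formula, restrict the class of Theorem~\ref{ACC} to $\mathcal{M}^{ct}_{2,2}$, express both in a common basis of the $5$-dimensional space $R^2(\mathcal{M}^{ct}_{2,2})$, and subtract; the paper then uses Getzler's change of basis to decorated boundary strata to read off the coefficients $1$, $2d^2-1$, $d^2-\tfrac{6}{5}$. One small caution: your claim that support on $\partial\mathcal{M}^{ct}_{2,2}$ forces the difference into the span of $\delta_{22},\delta_{11|},\delta_{11|12}$ is not justified as stated, since the boundary also supports the classes $\delta_{11|1},\delta_{11|2}$ (which together with the first three form a basis of $R^2(\mathcal{M}^{ct}_{2,2})$); however, since you are computing both sides explicitly and comparing in a full basis anyway, this heuristic is not actually load-bearing in your argument.
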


Here the class $\delta_{22}$ is the push-forward of the $\psi$-class in $\Mbar_{2,1}$ via the glueing map $\Mbar_{2,1}\times \Mbar_{0,3}\rightarrow \Delta_2\subset\Mbar_{2,2}$;
the class $\delta_{11|}$ is the class of the closure of the locus of curves with two elliptic tails attached at a rational component containing the two marked points;
finally, we have $\delta_{11|12}=\delta_{1,2}\cdot\delta_2$.

The set $\{\delta_{22}$, $\delta_{11|}$, $\delta_{11|12}\}$ extends to a basis of the $5$-dimensional space $R^2(\mathcal{M}^{ct}_{2,2})$ by means of the classes $\delta_{11|1}$, $\delta_{11|2}$ of the two components of the intersection $\delta_{1,1}\cdot\delta_{1,2}$: for $i\in\{1,2\}$, $\delta_{11|i}$ is the class of the closure of the locus of curves with a central rational component and two elliptic tails, with the point $i$ on one of the elliptic components, and the other point on the central rational component.

We remark that, as in the $g=1$ case, the difference of the pull-back of the zero section and the closure of the double ramification locus in $\M^{ct}_{2,2}$ is supported on loci of curves with a rational component containing the two marked points. 

Note that the class appearing on the left-hand side of (\ref{HAC}) has one more interpretation: by the results of \cite{MR2864866} and \cite{Marcus-Wise}, the class of the pull-back of the zero section coincides with the push-forward in $A^{g}(\M_{g,n}^{ct})$ of the virtual fundamental class of the space of relative stable maps to an unparameterized rational curve. See also \cite{RPnotes2013} for a discussion on different geometric ways to extend such classes to $\Mbar_{g,n}$. 

By varying $d$, Theorem \ref{ACC} gives us infinitely many rays in the pseudo-effective cone of codimension-two classes in $\Mbar_{2,2}$.
A natural problem is to study the cone spanned by the classes of $\overline{\mathcal{DR}}_2(d)$ for $d\geq 2$. Let $\left[\overline{\mathcal{DR}}_2(\infty)\right]$ be the class defined as follows:
\begin{eqnarray*}
\left[\overline{\mathcal{DR}}_2(\infty)\right] \!\!&:=&\!\! \lim_{d\rightarrow \infty}\quad \frac{\left[\overline{\mathcal{DR}}_2(d)\right]}{d^4}\\
&=&\!\! \frac{1}{2}\psi_1\psi_2 -\frac{1}{4} \left(\psi_1^2+\psi_2^2\right)
+\left(\psi_1+\psi_2\right)\left(-\frac{3}{20} \delta_{1,1}
+\frac{1}{10}\delta_{1,2}+\frac{1}{120}\delta_0 \right).
\end{eqnarray*}

\begin{cor}
\label{cone}
The classes of the loci $\overline{\mathcal{DR}}_2(d)$ for $d\geq 2$ lie in the two-dimensional cone spanned by the classes $\left[\overline{\mathcal{DR}}_2(2)\right]$ and $\left[\overline{\mathcal{DR}}_2(\infty)\right]$.
\end{cor}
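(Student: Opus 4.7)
The plan is to exploit a structural feature of the formula in Theorem \ref{ACC}: every coefficient appearing inside the bracket is an affine-linear function of $d^2$. Regrouping accordingly, I would rewrite
\[
\left[\overline{\mathcal{DR}}_2(d)\right] = (d^2-1)\,\Lambda \;+\; d^2(d^2-1)\,\Xi,
\]
where $\Lambda$ collects the $d$-independent part of the bracket and $\Xi$ collects the coefficients of $d^2$, both being fixed codimension-two classes on $\Mbar_{2,2}$. In particular, for every $d$ the class $\left[\overline{\mathcal{DR}}_2(d)\right]$ lies in the two-dimensional subspace of $A^2(\Mbar_{2,2})$ spanned by $\Lambda$ and $\Xi$.

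Next I would identify these two classes in terms of the distinguished ones appearing in the statement. Dividing the display above by $d^4$ and letting $d\to\infty$, the first summand is $O(1/d^2)$ while the second tends to $\Xi$, so by the very definition of the limit class one has $\Xi=\left[\overline{\mathcal{DR}}_2(\infty)\right]$. Specializing instead to $d=2$ gives the relation $\left[\overline{\mathcal{DR}}_2(2)\right] = 3\,\Lambda + 12\,\left[\overline{\mathcal{DR}}_2(\infty)\right]$, which I then solve for $\Lambda$.

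Substituting back into the formula produces the closed expression
\[
\left[\overline{\mathcal{DR}}_2(d)\right] = \frac{d^2-1}{3}\left[\overline{\mathcal{DR}}_2(2)\right] + (d^2-1)(d^2-4)\left[\overline{\mathcal{DR}}_2(\infty)\right].
\]
For every integer $d\geq 2$ both coefficients $(d^2-1)/3$ and $(d^2-1)(d^2-4)$ are non-negative, which gives the corollary. I do not anticipate any genuine obstacle: once Theorem \ref{ACC} is in hand, the argument reduces to the inspection of its coefficients plus elementary linear algebra in the plane spanned by $\Lambda$ and $\Xi$; the mild cross-check worth doing along the way is that $\Lambda$ and $\Xi$ are themselves linearly independent, so that the cone in the statement is genuinely two-dimensional, which is already visible from the fact that e.g.\ the $\psi_1\psi_2$-coefficient of $\Lambda$ vanishes while that of $\Xi$ does not.
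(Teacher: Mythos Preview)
Your proposal is correct and follows essentially the same approach as the paper: the paper's proof simply states the identity
\[
\left[\overline{\mathcal{DR}}_2(d)\right] = (d^2-1)\left(\tfrac{1}{3}\left[\overline{\mathcal{DR}}_2(2)\right] + (d^2-4)\left[\overline{\mathcal{DR}}_2(\infty)\right]\right)
\]
and observes that the coefficients are non-negative for $d\geq 2$, which is exactly the closed expression you derive. Your additional explanation of how to arrive at this identity (splitting the bracket into its $d^2$-constant and $d^2$-linear parts and identifying them via specialization) is a welcome elaboration of what the paper leaves as ``easy to verify.''
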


In \cite{MR3231020} Chen and Coskun show that infinitely many double ramification classes are extremal and rigid in the pseudo-effective cone of divisors of $\Mbar_{1,n}$ for $n\geq 3$, hence proving that $\Mbar_{1,n}$ is not a Mori dream space for $n\geq 3$. In \S\ref{codim2cone}, we show that for $d\geq 3$ the class of $\overline{\mathcal{DR}}_2(d)$ is {\it not extremal} in the pseudo-effective cone of codimension-two classes of $\Mbar_{2,2}$. In \S \ref{div}, we obtain stronger results for the push-forward of the classes of $\overline{\mathcal{DR}}_2(d)$ via the map $\pi_i\colon \Mbar_{2,2}\rightarrow\Mbar_{2,1}$ forgetting the point $i$: the divisor class $(\pi_i)_*\left[\overline{\mathcal{DR}}_2(d)\right]$ is {\it extremal and rigid} for $d=2$, and {\it big and moving} for $d\geq 3$.

In this paper all classes are stack fundamental classes, and all cohomology and Chow groups are taken with rational coefficients.

{\bf Acknowledgments.}
I would like to thank Renzo Cavalieri, Steffen Marcus, and Nicola Pagani for helpful discussions on double ramification loci, and the referee for suggesting several improvements in the exposition.


\section{Forgetting a point}
\label{div}

Let $\pi_i\colon \Mbar_{2,2}\rightarrow \Mbar_{2,1}$ be the map forgetting the point $i$, for $i=1,2$. In this section, we study the closures in $\Mbar_{2,1}$ of the divisors
\[
 (\pi_i)_*\left({\mathcal{DR}}_2(d)\right)=\{[C,p]\in\M_{2,1} : \exists \, x\in C\setminus\{p\} \,\, \mbox{such that}\,\, dp\equiv dx\}
\]
for $d\geq 2$. Note that we have
\[
 (\pi_1)_*\left(\overline{\mathcal{DR}}_2(d)\right)=(\pi_2)_*\left(\overline{\mathcal{DR}}_2(d)\right),
\]
since the loci $\overline{\mathcal{DR}}_2(d)$ are symmetric in the two marked points. The classes of these divisors will be one of the ingredients in the proof of Theorem \ref{ACC} in the next section.

We will need the following result about the enumerative geometry of pencils on the general curve, in the spirit of \cite[Theorem A,B]{MR664324}, \cite[\S 2]{MR735335}, \cite[\S 2.1]{MR2574363}.

\begin{prop}
\label{m}
 Let $(C,p)$ be a general pointed curve of genus $g\geq 1$. The number of pairs $(L,x)\in W^1_{g+1}(C)\times C$ satisfying the conditions 
\[
 h^0(L\otimes \mathcal{O}(-(g+1)x))\geq 1 \quad \textrm{and} \quad h^0(L\otimes \mathcal{O}(-2p))\geq 1 
\]
is
\[
 m(g):=(g+2)g^2.
\]
\end{prop}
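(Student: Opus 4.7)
My plan is to recast the count as the degree of a morphism between $g$-dimensional varieties, compute it by Poincar\'e's formula on the Jacobian, and then subtract the local multiplicity of the distinguished fibre point at $(x,E) = (p,(g-1)p)$, which corresponds to the trivial solution $L = \mathcal{O}_C((g+1)p)$.

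Since $\deg L = g+1$, the condition $h^0(L(-(g+1)x))\geq 1$ forces $L \cong \mathcal{O}_C((g+1)x)$, and the second condition becomes $(g+1)x \sim 2p + E$ for some effective divisor $E$ of degree $g-1$. Let $\alpha\colon C \to J(C)$ and $\beta\colon C^{(g-1)} \to J(C)$ be the Abel--Jacobi maps with base point $p$, and set
\[
f\colon C \times C^{(g-1)} \longrightarrow J(C), \qquad (x,E) \longmapsto (g+1)\alpha(x) - \beta(E).
\]
Pairs $(L,x)$ as in the statement correspond bijectively to the fibre $f^{-1}(0)$. Combining the standard identities $\alpha^*\theta = g[\mathrm{pt}]$, $[n]^*\theta = n^2\theta$ on $J(C)$, and $\int_{C^{(g-1)}}(\beta^*\theta)^{g-1} = g!$, together with a K\"unneth expansion
\[
f^*\theta \,=\, (g+1)^2g\,\eta_1 \,+\, h \,-\, (g+1)\zeta,
\]
where $h = \beta^*\theta$ and $\zeta \in H^1(C)\otimes H^1(C^{(g-1)})$ is a cross term satisfying $\zeta^2 = -2\eta_1\cdot h$, only two terms in $(f^*\theta)^g$ hit the top class and the result is
\[
\deg f = (g+1)^2 g.
\]

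The pair $(x_0,E_0) = (p,(g-1)p)$ always lies in $f^{-1}(0)$. Since $p$ is not a Weierstrass point, one may choose coordinates on $J(C)$ in which $\alpha(t) = (t,t^2,\dots,t^g) + O(t^{g+1})$, so that $\beta(t_1,\dots,t_{g-1}) = (s_1,\dots,s_g)$ with $s_k = \sum_j t_j^k$. The equations $f = 0$ locally become $(g+1)t^k = s_k$ for $k = 1,\dots,g$. Newton's identities determine $e_i = \binom{g+1}{i}t^i$ uniquely from the first $g-1$ equations, and substitution into $s_g = \sum_{i=1}^{g-1}(-1)^{i-1}e_is_{g-i}$, using $\sum_{i=0}^{g+1}(-1)^i\binom{g+1}{i}=0$, yields $s_g = (g+1)\bigl(1+(-1)^g g\bigr)t^g$. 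The residual $g$-th equation is $(-1)^g g(g+1)\,t^g = 0$, so the local length of $f^{-1}(0)$ at $(x_0,E_0)$ is exactly $g$. Subtracting,
\[
m(g) \,=\, (g+1)^2 g - g \,=\, g^2(g+2).
\]

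The main obstacle I foresee is twofold. First, the K\"unneth identity $\zeta^2 = -2\eta_1\cdot h$ must be derived carefully from the Poincar\'e-bundle decomposition $m^*\theta = p_1^*\theta + p_2^*\theta + P$ on $J(C)\times J(C)$, with signs tracked through graded commutativity. Second, I must ensure that $(x_0,E_0)$ is the only non-transverse point of $f^{-1}(0)$, so that the remaining $(g+1)^2g - g$ fibre points all count with multiplicity one; I expect this to follow from the genericity of $(C,p)$ via a local analysis of the differential of $f$ at a point where $x \ne p$ and the relevant pencil $\mathcal{O}((g+1)x)$ is base-point free with simple double ramification at $p$.
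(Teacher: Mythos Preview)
Your approach is correct but differs substantially from the paper's. The paper argues by degeneration: attach an elliptic tail $E$ to $(C,p)$ to obtain a Brill--Noether general curve of genus $g+1$, invoke the Harris--Mumford count $g(g+1)(g+2)$ of pencils of degree $g+1$ with a point of total ramification, and then split the resulting admissible covers of $C\cup_p E$ according to whether the total ramification point lies on $E$ (contributing $(g+1)^2-1$, the nontrivial $(g+1)$-torsion of $E$) or on $C$ (contributing $m(g)$). Your route is instead a direct Jacobian computation on $C$ itself: the degree $g(g+1)^2$ of $f$ comes from Poincar\'e's formula and a K\"unneth expansion, and the excess length $g$ at the distinguished point $(p,(g-1)p)$ is extracted via Newton's identities. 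The paper's argument is shorter because it outsources the main enumerative step to \cite{MR664324}, while yours is more self-contained but, as you acknowledge, still owes a verification that the remaining fibre points of $f$ are reduced; this follows from the genericity of $(C,p)$ by a standard differential argument, and is essentially equivalent to the transversality the paper implicitly uses when asserting that each admissible cover with $x\in C$ counts once.
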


\begin{proof}
By \cite[Theorem B]{MR664324}, a smooth Brill-Noether general curve $\widetilde{C}$ of genus $g+1$ has 
\[
 g(g+1)(g+2)
\]
pairs $(L,x)\in W^1_{g+1}(\widetilde{C})\times \widetilde{C}$ such that $h^0(L\otimes \mathcal{O}(-(g+1)x))\geq 1$.

 Let $(C,p)$ be a general pointed curve of genus $g\geq 1$, and let us consider the curve obtained from $C$ by attaching an elliptic tail $(E,p)$ at the point $p\in C$. Since $p$ is general in $C$, the curve $C\cup_p E$ is a Brill-Noether general curve.  It follows that the curve $C\cup_p E$ admits $g(g+1)(g+2)$ admissible covers of degree $g+1$ totally ramified at a certain point $x\in C\cup_p E$. 

We distinguish two cases. If $x\in E$, then the admissible cover is totally ramified at $x$ and $p$, the restriction to $C$ is uniquely determined by $|\mathcal{O}((g+1)p)|$, and $p-x$ is a non-trivial $(g+1)$-torsion point in $\textrm{Pic}^0(E)$. There are $(g+1)^2-1$ choices for $x$, and each determines a unique admissible cover. If $x\in C$, then
the restriction of the admissible cover to $C$ is totally ramified at $x$ and has a simple ramification at $p$. That is, it corresponds to a pair $(L,x)$ as in the statement. The restriction to $E$ is uniquely determined by $|\mathcal{O}(2p)|$. It follows that
\[
  g(g+1)(g+2) = ((g+1)^2-1)+m(g),
\]
hence the statement
\end{proof}

We are now ready to study the classes of the divisors $(\pi_i)_*\left(\overline{\mathcal{DR}}_2(d)\right)$. We recall that the classes $\psi,\delta_0,\delta_1$ form a basis for $\textrm{Pic}(\Mbar_{2,1})$, and we have the following equality $\lambda=\frac{1}{10}\delta_0+\frac{1}{5}\delta_1$.
Let $\mathcal{W}$ be the Weierstass divisor in $\M_{2,1}$
\[
 \mathcal{W} = \{[C,p]\in\M_{2,1}: p\in C \,\,\mbox{is a Weierstrass point}\}.
\]
The class of the closure of $\mathcal{W}$ in $\Mbar_{2,1}$ is
\[
\left[\overline{\mathcal{W}}\right]=3\psi-\frac{1}{10}\delta_0-\frac{6}{5}\delta_1
\]
(see \cite[Theorem 2.2]{MR910206}). If $d=2$, it is easy to see that the class of the locus $(\pi_i)_*\left( \overline{\mathcal{DR}}_2(2) \right)$ is $5\cdot \left[\overline{\mathcal{W}} \right]$. Indeed, for every smooth pointed curve $(C,p)$ of genus $2$ with $p\in C$ a Weierstrass point, there are $5$ points in ${\mathcal{DR}}_2(2)$ lying over $(C,p)$, and they correspond to the other $5$ Weierstrass points of $C$. Using admissible covers, stable pointed curves also have $5$ points lying over them.

When $d=3$, the class of the locus $(\pi_i)_*\left( \overline{\mathcal{DR}}_2(3) \right)$ has been computed by Farkas in \cite[Proposition 4.1]{MR2574363}. The computation in the case $d\geq 3$ is a straightforward generalization of the case $d=3$. In the following, we work out the details.

In \cite{MR791679} Diaz studied the closure in $\Mbar_g$ of the following divisor of exceptional Weierstrass points
\[
 \mathfrak{Di}_{g-1}=\{[C]\in\M_g: \exists \, x\in C \,\,\mbox{such that}\,\,h^0(\mathcal{O}_C(g-1)x)\geq 2 \}
\]
for $g\geq 3$, and computed its class in $\textrm{Pic}(\Mbar_g)$:
\[
 \overline{\mathfrak{Di}}_{g-1}\equiv \frac{g^2(g-1)(3g-1)}{2}\lambda-\frac{(g-1)^2 g (g+1)}{6}\delta_0-\sum_{i\geq 1}\frac{i(g-i)g(g^2+g-4)}{2}\delta_i.
\]
For $d\geq 2$, let $\chi_d\colon \Mbar_{2,1}\rightarrow \Mbar_{d+1}$ be the map obtained by attaching a fixed general pointed curve of genus $d-1$ at the marked point. 
 
\begin{prop} 
\label{Di}
For $d\geq 2$, we have
\[
 \chi_d^*\left(\overline{\mathfrak{Di}}_d\right) \equiv (\pi_i)_*\left( \overline{\mathcal{DR}}_2(d) \right) + (d+1)(d-1)^2 \cdot\overline{\mathcal{W}}\,\,\in \mbox{\rm Pic}(\Mbar_{2,1}).
\]
\end{prop}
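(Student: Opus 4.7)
The plan is to generalize the argument of Farkas for $d=3$ in \cite[Prop.~4.1]{MR2574363}, carried out via an admissible covers / refined limit linear series analysis on the reducible curve $C\cup_p D$, where $(D,q)$ denotes the fixed general pointed curve of genus $d-1$ glued at $p=q$. A stable curve $C\cup_p D$ lies in $\overline{\mathfrak{Di}}_d$ iff it carries a refined limit $g^1_d$ with vanishing sequence $(0,d)$ at some point $x$ (total ramification). I would stratify $\chi_d^{*}\overline{\mathfrak{Di}}_d$ by the location of $x$.

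Case (A), $x\in C\setminus\{p\}$: the $C$-aspect satisfies $L_C\cong\mathcal{O}_C(dx)$. The genericity of $(D,q)$ forces its first non-gap at $q$ to equal $d$, so the only possible $D$-aspect has vanishing $(0,d)$ at $q$ with $L_D\cong\mathcal{O}_D(dq)$. The node-compatibility $a_i+b_{1-i}=d$ then forces the $C$-aspect to have vanishing $(0,d)$ at $p$, hence $L_C\cong\mathcal{O}_C(dp)$. Equating with $L_C\cong\mathcal{O}_C(dx)$ yields $dp\sim dx$ on $C$, the defining relation of $\mathcal{DR}_2(d)$. This case therefore contributes $(\pi_i)_{*}\overline{\mathcal{DR}}_2(d)$ with multiplicity one.

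Case (B), $x\in D\setminus\{q\}$: symmetrically $L_D\cong\mathcal{O}_D(dx)$ with vanishing $(0,\beta)$ at $q$, which forces the $C$-aspect to have vanishing $(d-\beta,d)$ at $p$. The section of order $d-\beta$ has divisor $(d-\beta)p+E$ with $E$ effective of degree $\beta$ disjoint from $p$ and $E\sim\beta p$, i.e.\ $h^0(\mathcal{O}_C(\beta p))\geq 2$. On a genus-$2$ curve this is divisorial in $(C,p)$ only for $\beta=2$, where it cuts out the Weierstrass locus. The count of matching pairs $(L_D,x)$ on $(D,q)$ with $\beta=2$ is precisely $m(d-1)=(d+1)(d-1)^2$ by Proposition~\ref{m}, so this case contributes $(d+1)(d-1)^2\cdot\overline{\mathcal{W}}$.

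Combining the two cases yields the claimed equality on the open locus $\mathcal{M}_{2,1}$; to promote it to $\mathrm{Pic}(\Mbar_{2,1})$ I would verify boundary compatibility by degenerating $C$ along $\delta_0$ and $\delta_1$ and tracking the admissible covers there, or equivalently by expanding both sides in the basis $\{\psi,\delta_0,\delta_1\}$. The principal obstacle is Case (B) with $\beta\geq 3$: a matching refined limit $g^1_d$ exists for every $(C,p)$ but moves in a positive-dimensional family, so by the smoothability criterion its contribution to the divisor class vanishes and this case must be argued away. One also needs to check that $x=p$ (ramification at the node) contributes only to higher-codimension strata and hence produces no divisorial class.
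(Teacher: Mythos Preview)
Your approach is the same as the paper's (which in turn defers to Farkas for $d=3$): analyze admissible covers / limit $g^1_d$'s on $C\cup_p D$ according to the location of the totally ramified point $x$, and match the two cases to $(\pi_i)_*\overline{\mathcal{DR}}_2(d)$ and a multiple of $\overline{\mathcal{W}}$. The paper's own proof is extremely terse and simply asserts the ramification profiles at $p$ in each case; you attempt to justify them, which is good, but two points in your justification need correction.

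In Case (A), your deduction that the $D$-aspect has vanishing $(0,d)$ at $q$ is presented as a direct consequence of the first non-gap at $q$ being $d$; this does not follow. The missing step is on the $C$-side: the section of $L_C\cong\mathcal{O}_C(dx)$ with divisor $dx$ does not vanish at $p$ (since $x\neq p$), so $a_0=0$, whence $b_1=d$ and $L_D\cong\mathcal{O}_D(dq)$. Only then does the first non-gap being $d$ (equivalently $h^0(\mathcal{O}_D(dq))=2$) force $V_D=H^0(\mathcal{O}_D(dq))$ and hence $b_0=0$, giving $a_1=d$.

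In Case (B), your ``principal obstacle'' for $\beta\geq 3$ rests on a false premise. You check only the $C$-aspect and conclude that a matching refined limit $g^1_d$ exists for every $(C,p)$; but you must also produce a $D$-aspect, namely $(L_D=\mathcal{O}_D(dx),V_D)$ containing a section vanishing to order $\geq\beta$ at $q$, i.e.\ $h^0(\mathcal{O}_D(dx-\beta q))\geq 1$. The adjusted Brill--Noether number for a $g^1_d$ on $D$ with vanishing $(0,d)$ at $x$ and $(0,\beta)$ at $q$ is $1-\beta$, so allowing $x$ to move the expected dimension is $2-\beta$. For general $(D,q)$ this is the actual dimension: for $\beta=2$ one gets finitely many $x$, precisely the $m(d-1)$ counted by Proposition~\ref{m}, and for $\beta\geq 3$ the locus is empty. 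Thus there is simply no limit $g^1_d$ with $\beta\geq 3$ to dispose of, and no appeal to smoothability or excess intersection is needed.
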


\begin{proof}
 The proof is identical to the $d=3$ case shown in \cite{MR2574363}. Let $[C_2,p]$ be a point in $\Mbar_{2,1}$, and let $[C_{d-1},p]$ be a general pointed curve of genus $d-1$. Suppose that $[C_2\cup_p C_{d-1}]$ is contained in $\chi_d^*\left(\overline{\mathfrak{Di}}_d\right)$. Then, there exists an admissible cover of $C_2\cup_p C_{d-1}$ of degree $d$ totally ramified at a smooth point $x\in C_2\cup_p C_{d-1}$. There are two cases. If $x\in C_{d-1}$, then the admissible cover has a simple ramification at the point $p$, the restriction on $C_2$ is uniquely determined by $|\mathcal{O}(2p)|$, $p\in C_2$ is a Weierstrass point, and by Proposition \ref{m} there are $m(d-1)$ choices for the restriction of the cover on $C_{d-1}$. If $x\in C_2$, then the admissible cover is totally ramified at $x$ and $p$, hence the restriction on $C_2$ is uniquely determined, and the restriction on $C_{d-1}$ is uniquely determined by $|\mathcal{O}(dp)|$. 
\end{proof}

\begin{cor}
\label{pfwdclass}
For $d\geq 2$ we have
\[
(\pi_i)_*\left[ \overline{\mathcal{DR}}_2(d) \right] = (d^2-1)\left((d^2+1)\psi-\frac{d^2+6}{5}\left(\frac{1}{12}\delta_0+\delta_1\right) \right).
\]
\end{cor}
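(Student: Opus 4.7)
The strategy is to substitute Diaz's formula for $[\overline{\mathfrak{Di}}_d]$ into the identity
\[
(\pi_i)_*[\overline{\mathcal{DR}}_2(d)] \;=\; \chi_d^*[\overline{\mathfrak{Di}}_d] \;-\; (d+1)(d-1)^2[\overline{\mathcal{W}}]
\]
furnished by Proposition \ref{Di}, and to carry out the resulting computation on $\Mbar_{2,1}$. The real work is entirely in determining the pullback action of $\chi_d^*$ on $\textrm{Pic}(\Mbar_{d+1})$.

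For $d\geq 3$, the map $\chi_d$ factors through the gluing morphism $\xi\colon \Mbar_{2,1}\times\Mbar_{d-1,1}\to \Mbar_{d+1}$ by restricting to the fiber over the fixed point $[C_{d-1},p_0]$, and its image lies in $\Delta_2^{\Mbar_{d+1}}$. Standard decomposition and normal-bundle arguments then yield $\chi_d^*(\lambda)=\lambda$ (the Hodge bundle splits over the separating node and the $C_{d-1}$-contribution is constant), $\chi_d^*(\delta_0)=\delta_0$ (non-separating nodes come only from $C_2$), $\chi_d^*(\delta_1)=\delta_1$ (the only $(1,d)$-split arises from an elliptic tail of $C_2$, transverse to $\Delta_2$), $\chi_d^*(\delta_2)=-\psi$ (from $\xi^*\delta_2 = -\psi_p - \psi_{p_0}$, with $\psi_{p_0}$ restricting to zero on the fiber), and $\chi_d^*(\delta_i)=0$ for $i\geq 3$. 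The one delicate point is verifying that no deeper boundary stratum of $\Mbar_{2,1}$ contributes a correction to $\chi_d^*(\delta_2)$; but for $d\geq 3$ a generic boundary point of $\Mbar_{2,1}$ maps into $\Delta_2\cap\Delta_0$ or $\Delta_2\cap\Delta_1$, never into $\Delta_2\cap\Delta_2$, so no correction arises.

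Substituting Diaz's formula with $g=d+1$, invoking Mumford's relation $\lambda=\frac{1}{10}\delta_0+\frac{1}{5}\delta_1$ on $\Mbar_{2,1}$, and subtracting $(d+1)(d-1)^2[\overline{\mathcal{W}}]$ with $[\overline{\mathcal{W}}]=3\psi-\frac{1}{10}\delta_0-\frac{6}{5}\delta_1$, every coefficient factors through $(d^2-1)$: the $\psi$-coefficient collapses as $(d^2-1)\bigl[(d^2+3d-2)-3(d-1)\bigr]=(d^2-1)(d^2+1)$, while the $\delta_0$- and $\delta_1$-coefficients simplify to $-(d^2-1)(d^2+6)/60$ and $-(d^2-1)(d^2+6)/5$ respectively, matching the claimed expression. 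The case $d=2$ is resolved independently in the text via the identity $(\pi_i)_*[\overline{\mathcal{DR}}_2(2)]=5[\overline{\mathcal{W}}]$, and one checks directly that the closed formula specializes correctly at $d=2$.
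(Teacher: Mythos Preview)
Your proof is correct and follows essentially the same route as the paper: apply Proposition~\ref{Di}, use the standard pullback formulae $\chi_d^*(\lambda)=\lambda$, $\chi_d^*(\delta_0)=\delta_0$, $\chi_d^*(\delta_1)=\delta_1$, $\chi_d^*(\delta_2)=-\psi$, and simplify. Your treatment is in fact slightly more careful than the paper's: you correctly restrict the pullback argument to $d\geq 3$ (for $d=2$ the image of $\chi_2$ lies in $\Delta_1\subset\Mbar_3$ rather than $\Delta_2$, so the formula $\chi^*(\delta_1)=\delta_1$ acquires a $-\psi$ correction) and handle $d=2$ separately via $(\pi_i)_*[\overline{\mathcal{DR}}_2(2)]=5[\overline{\mathcal{W}}]$, whereas the paper's one-line proof glosses over this point.
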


\begin{proof}
 The statement follows from Proposition \ref{Di} using the following well known formulae: $\chi^*(\lambda)=\lambda$, $\chi^*(\delta_0)=\delta_0$, $\chi^*(\delta_1)=\delta_1$, and $\chi^*(\delta_2)=-\psi$.
\end{proof}

In \cite{MR2216265} Rulla shows that the pseudo-effective cone of divisor classes in $\Mbar_{2,1}$ is generated by the classes $\left[\overline{\mathcal{W}}\right]$, $\delta_0$, $\delta_1$; the nef cone is generated by the classes $\psi$, $\lambda=\frac{1}{10}\delta_0+\frac{1}{5}\delta_1$, and $12\lambda-\delta_0$; finally, the moving cone is generated by the nef cone together with the classes $D=30\left(\left[\overline{\mathcal{W}}\right]+\psi\right)$ and $E=20\left[\overline{\mathcal{W}}\right]+3\delta_0+6\delta_1$. 

\begin{cor}
The classes $(\pi_i)_*\left[ \overline{\mathcal{DR}}_2(d) \right]$ lie in the two-dimensional cone spanned by the classes $(\pi_i)_*\left[ \overline{\mathcal{DR}}_2(2) \right]=5\left[\overline{\mathcal{W}}\right]$ and $(\pi_i)_*\left[ \overline{\mathcal{DR}}_2(\infty) \right]=\frac{1}{6}\left[\overline{\mathcal{W}}\right]+\frac{1}{2}\psi$ inside the pseudo-effective cone of divisor classes in $\Mbar_{2,1}$. In particular, the class $(\pi_i)_*\left[ \overline{\mathcal{DR}}_2(d) \right]$ is extremal and rigid for $d=2$, and big and moving for $d>2$.
\end{cor}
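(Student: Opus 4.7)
The plan is to re-express the class computed in Corollary \ref{pfwdclass} in the basis $\{\left[\overline{\mathcal{W}}\right],\psi\}$ rather than $\{\psi,\delta_0,\delta_1\}$, which aligns the formula perfectly with Rulla's cone descriptions of $\Mbar_{2,1}$. Using the identity $\left[\overline{\mathcal{W}}\right]=3\psi-\frac{6}{5}\left(\frac{1}{12}\delta_0+\delta_1\right)$ to substitute out the boundary term in Corollary \ref{pfwdclass}, a direct linear-algebra computation should yield
\[
(\pi_i)_*\left[ \overline{\mathcal{DR}}_2(d) \right] = \frac{(d^2-1)(d^2+6)}{6}\left[\overline{\mathcal{W}}\right] + \frac{(d^2-1)(d^2-4)}{2}\,\psi.
\]
Both coefficients are non-negative for $d\geq 2$: the $\psi$-coefficient vanishes precisely at $d=2$, giving $5\left[\overline{\mathcal{W}}\right]$, while dividing by $d^4$ and letting $d\to\infty$ recovers $\frac{1}{6}\left[\overline{\mathcal{W}}\right]+\frac{1}{2}\psi$. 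This simultaneously verifies the cone statement and identifies the two extremal directions.

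For $d=2$, since Rulla's pseudo-effective cone of $\Mbar_{2,1}$ is generated by $\left[\overline{\mathcal{W}}\right]$, $\delta_0$, and $\delta_1$, a positive multiple of $\left[\overline{\mathcal{W}}\right]$ spans an extremal ray; rigidity of the class is inherited from rigidity of the irreducible effective divisor $\overline{\mathcal{W}}$, also established in \cite{MR2216265}.

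For $d>2$, I would verify bigness by substituting $\psi=\frac{1}{3}\left[\overline{\mathcal{W}}\right]+\frac{1}{30}\delta_0+\frac{2}{5}\delta_1$ into the formula above; each of the three coefficients on $\left[\overline{\mathcal{W}}\right]$, $\delta_0$, $\delta_1$ becomes strictly positive, placing the class in the interior of the pseudo-effective cone. For the moving property, the key observation is that the moving-cone generator $D=30\left(\left[\overline{\mathcal{W}}\right]+\psi\right)$ and the nef (hence moving) class $\psi$ allow the decomposition
\[
(\pi_i)_*\left[ \overline{\mathcal{DR}}_2(d) \right] = \frac{(d^2-1)(d^2+6)}{180}\,D + \frac{(d^2-1)(d^2-9)}{3}\,\psi,
\]
whose right-hand side is a non-negative combination of moving classes exactly when $d\geq 3$.

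The computation is essentially mechanical once the two substitutions are in place; the only substantive point is that the threshold at which $b_d-a_d\geq 0$ in the decomposition above coincides precisely with the $d=2$ versus $d\geq 3$ dichotomy in the statement, so that both the extremality assertion and the moving assertion fall out of the same arithmetic comparison between the coefficients appearing in Corollary \ref{pfwdclass}.
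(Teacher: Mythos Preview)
Your proof is correct and follows essentially the same route as the paper: both rewrite Corollary \ref{pfwdclass} in the basis $\{[\overline{\mathcal{W}}],\psi\}$ to obtain $(d^2-1)\bigl(\tfrac{d^2+6}{6}[\overline{\mathcal{W}}]+\tfrac{d^2-4}{2}\psi\bigr)$, then invoke Rulla's cone descriptions for the extremal/rigid and big/moving assertions. Your decomposition in terms of $D$ and $\psi$ for the moving property is exactly the paper's observation that for $d\geq 3$ the class lies in the cone spanned by $D$ and $\psi$, made explicit.
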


\begin{proof}
It is easy to verify the following two equalities
\begin{eqnarray*}
 (\pi_i)_*\left[ \overline{\mathcal{DR}}_2(d) \right] &=& (d^2-1) \left(\frac{d^2+6}{6} \left[\overline{\mathcal{W}}\right] + \frac{d^2-4}{2}\psi\right) \\
&=& (d^2-1)\left(\frac{1}{3}(\pi_i)_*\left[ \overline{\mathcal{DR}}_2(2) \right] +(d^2-4) (\pi_i)_*\left[ \overline{\mathcal{DR}}_2(\infty) \right]\right).
\end{eqnarray*}
Note that this last equality is the push-forward of equality (\ref{2dimcone}).
It follows that the classes $(\pi_i)_*\left[ \overline{\mathcal{DR}}_2(d) \right]$ are in the cone spanned by the classes $(\pi_i)_*\left[ \overline{\mathcal{DR}}_2(2) \right]$ and $(\pi_i)_*\left[ \overline{\mathcal{DR}}_2(\infty) \right]$. Since the class $\psi$ is big, from the first equality above we have that the classes $(\pi_i)_*\left[ \overline{\mathcal{DR}}_2(d) \right]$ are big for $d\geq 3$. Note that, for $d=3$, the class $(\pi_i)_*\left[ \overline{\mathcal{DR}}_2(3) \right]$ is a multiple of the class $D$, and for $d\geq 3$ the classes $(\pi_i)_*\left[ \overline{\mathcal{DR}}_2(d) \right]$ lie in the cone spanned by $D$ and $\psi$, hence by Rulla's result, they are in the boundary of the moving cone.
\end{proof}

\section{Counting admissible covers}
\label{adm}

By \cite{MR2120989}, the classes of the loci $\overline{\mathcal{DR}}_g(\underline{d})$ lie in the tautological group $R^g(\Mbar_{g,n})\subset A^g(\Mbar_{g,n})$. By the results of \cite{MR1672112}, we have that the rational cohomology ring of $\Mbar_{2,2}$ is generated by $\textrm{Pic}_{\mathbb{Q}}(\Mbar_{2,2})$, hence in particular $RH^2(\Mbar_{2,2})=H^4(\Mbar_{2,2},\mathbb{Q})$.  

The Picard group $\textrm{Pic}(\Mbar_{2,2})$ is generated by the classes $\psi_1, \psi_2, \delta_0, \delta_2, \delta_{1,1}, \delta_{1,2}$, defined in the introduction.
Getzler shows that the relations in $RH^2(\Mbar_{2,2},\mathbb{Q})$ amongst products of divisor classes are spanned by
\begin{eqnarray}
\label{relations}
 \delta_{1,2}\left(12\delta_{1,1}+12\delta_{1,2}+\delta_0 \right)=\delta_{1,1}\left(12\delta_{1,1}+12\delta_{1,2}+\delta_0 \right)=0,\nonumber\\
\delta_{1,1}\left(\psi_1+\psi_2+\delta_{1,1} \right)=\psi_1\delta_2=\psi_2\delta_2=\delta_{1,1}\delta_2=0,\\
\left(\psi_1-\psi_2 \right)\left(10\psi_1+10\psi_2-2\delta_{1,1}-12\delta_{1,2}-\delta_0 \right)=0.\nonumber
\end{eqnarray}
Moreover, $R^2(\Mbar_{2,2})$ is isomorphic to $RH^2(\Mbar_{2,2})$ via the cycle map (see for instance \cite{MR2294106}). It follows that we can write 
\begin{eqnarray}
\label{basis}
\left[\overline{\mathcal{DR}}_2(d)\right] &=& 
A_{\psi_1\psi_2} \psi_1\psi_2 + A_{\psi_1^2+\psi_2^2} \left(\psi_1^2+\psi_2^2\right) + A_{\psi_1\delta_{1,1}} \psi_1\delta_{1,1} + A_{\psi_2\delta_{1,1}} \psi_2\delta_{1,1} \nonumber\\
&&{}+ A_{\psi_1\delta_{1,2}} \psi_1\delta_{1,2} + A_{\psi_2\delta_{1,2}} \psi_2\delta_{1,2} + A_{\psi_1\delta_0} \psi_1\delta_0 + A_{\psi_2\delta_0} \psi_2\delta_0 \nonumber\\
&&{}+A_{\delta_2^2} \delta_2^2  + A_{\delta_{1,2}\delta_2} \delta_{1,2}\delta_2 + A_{\delta_0\delta_2} \delta_0\delta_2 + A_{\delta_0\delta_{1,1}} \delta_0\delta_{1,1} \\
&&{}+ A_{\delta_0\delta_{1,2}} \delta_0\delta_{1,2} + A_{\delta_0^2} \delta_0^2\nonumber
\end{eqnarray}
in $R^2(\Mbar_{2,2})$.

\subsection{Test surfaces}
\label{testsurfaces}

In the following, we are going to intersect both sides of the formula in (\ref{basis}) with $10$ test surfaces in $\Mbar_{2,2}$. Each intersection will produce a linear equation in the coefficients $A$. 
{\setlength{\leftmargini}{0pt} 
\begin{enumerate}

\item \label{9} Let $C$ be a general curve of genus $2$. Consider the family of curves in $\Mbar_{2,2}$ whose fibers are obtained by varying two points in $C$. 

\begin{figure}[htbp]
\centering
  \def\svgwidth{0.4\columnwidth}
  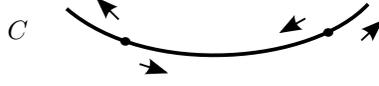
  \caption{How the general fiber of the family (1) moves.}
\end{figure}

The base of this family is $C\times C$. The family is obtained as the blow-up $\widetilde{C\times C \times C}$ of $C\times C\times C$ along the diagonal $\{(p,p,p) \,|\, p\in C \}$.

Let $\pi_i \colon C\times C\rightarrow C$ be the projection on the $i$-th factor, for $i=1,2$. Denote by $S_1$ the proper transform of $\{(p,q,p)\,|\, p,q\in C\}$, and by $S_2$ the proper transform of $\{(p,q,q)\,|\, p,q\in C\}$. On the surface $C\times C$, we have
\begin{eqnarray*}
\psi_1 &=& -c_1\left(N_{S_1/\widetilde{C\times C \times C}}\right) = \pi_1^*(K_C)+\Delta_{C\times C},\\
\psi_2 &=& -c_1\left(N_{S_2/\widetilde{C\times C \times C}}\right) = \pi_2^*(K_C)+\Delta_{C\times C},\\
\delta_2 &=& \Delta_{C\times C},
\end{eqnarray*}
and all other divisor classes restrict to zero. It follows that
\begin{eqnarray*}
\psi_i^2 &=& 2(2g(C)-2)+(2-2g(C))=2, \quad \mbox{for}\,\, i=1,2,\\
\psi_1\psi_2 &=& (2g(C)-2)(2g(C)-2) +2(2g(C)-2) +(2-2g(C))=6,\\
\delta_2^2 &=& 2-2g(C) =-2.
\end{eqnarray*}

The fibers of this family are either smooth or consist of the curve $C$ attached to a rational tail containing the two marked points. An admissible cover of one of the singular fibers has to have a ramification of order at least $2$ at the singular point. By the Riemann-Hurwitz formula, the singular fibers do not admit an admissible cover totally ramified at the marked points. A smooth fiber $(C,p,q)$ has an admissible cover of degree $d$ totally ramified at $p$ and $q$ if and only if $p-q$ is a non-trivial $d$-torsion point in $\textrm{Pic}^0(C)$. Let $\varphi\colon C\times C\rightarrow \textrm{Pic}^0(C)$ be the difference map defined as $\varphi(p,q)=\mathcal{O}_C(p-q)$. The map $\varphi$ is surjective of degree $2$ (see \cite[Chapter V]{MR770932}). It follows that $2(d^4-1)$ fibers of this family are in $\overline{\mathcal{DR}}_2(d)$. Since the admissible cover is uniquely determined, each fiber contributes with multiplicity one. We deduce the following relation
\[
4A_{\psi_1^2+\psi_2^2} + 6A_{\psi_1\psi_2} -2A_{\delta_2^2} = 2(d^4-1).
\]

\item \label{1} Let $(E_1,p_1,x_1)$ and $(E_2,p_2,x_2)$ be two $2$-pointed elliptic curves. Identify the points $p_1$ and $p_2$. Consider the family of curves whose fibers are obtained by varying $x_1\in E_1$ and $x_2\in E_2$.

\begin{figure}[htbp]
\centering
  \def\svgwidth{0.6\columnwidth}
  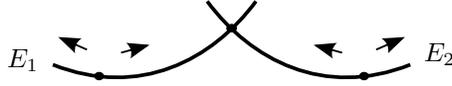
  \caption{How the general fiber of the family (2) moves.}
\end{figure}

The basis of this family is $E_1\times E_2$. To construct this family, let $\widetilde{E_i\times E_i}$ be the blow-up of $E_i\times E_i$ at $(p_i,p_i)$, let $\Gamma_i$ be the proper transform of ${p_i}\times E_i$, and let $\sigma_{\Delta_i}$ be the section corresponding to the proper transform of the diagonal $\Delta_i\subset E_i\times E_i$, for $i=1,2$. Consider $\widetilde{E_1\times E_1}\times E_2$ and $E_1\times \widetilde{E_2\times E_2}$, and identify $\Gamma_1\times E_2$ with $E_1\times\Gamma_2$. The sections $\sigma_{\Delta_1}$ and $\sigma_{\Delta_2}$ give rise to two sections of this family over $E_1\times E_2$.

Let $\pi_i\colon E_1\times E_2\rightarrow E_i$ be the projection on the $i$-th factor, for $i=1,2$. The non-zero restrictions of the divisor classes are
\begin{eqnarray*}
\psi_1 &=& -c_1\left(N_{\sigma_{\Delta_1}/\widetilde{E_1\times E_1}}\right) = \pi_1^*[p_1],\\
\psi_2 &=& -c_1\left(N_{\sigma_{\Delta_2}/\widetilde{E_2\times E_2}}\right) = \pi_2^*[p_2],\\
\delta_{1,1} &=& c_1\left(N_{\Gamma_1/\widetilde{E_1\times E_1}}\otimes N_{\Gamma_2/\widetilde{E_2\times E_2}}\right) = {}-\pi_1^*[p_1]-\pi_2^*[p_2],\\
\delta_{1,2} &=& \pi_1^*[p_1]+\pi_2^*[p_2].
\end{eqnarray*}
Hence, we obtain
\begin{align*}
 \psi_1\psi_2 &= 1, & \psi_1\delta_{1,1} = \psi_2\delta_{1,1}& = -1, & \psi_1\delta_{1,2} = \psi_2\delta_{1,2} &= 1. 
\end{align*}

A fiber of this family has an admissible cover of degree $d$ totally ramified at the two marked points if and only if $p_i-x_i$ is a non-trivial $d$-torsion point in $\textrm{Pic}^0(E_i)$, for $i=1,2$. For each such fiber $(C,x_1,x_2)$, the admissible cover $C\rightarrow D$ is unique. By the argument of \cite[Theorem 6]{MR664324}, the map from a neighborhood of the point $[C\rightarrow D]$ in the moduli space of admissible covers to the universal deformation space of $(C,x_1,x_2)$ is transverse to $\delta_{1,1}$. It follows that each fiber in the intersection of this family with $\overline{\mathcal{DR}}_2(d)$ counts with multiplicity one. We deduce
\[
A_{\psi_1\psi_2} -A_{\psi_1\delta_{1,1}} -A_{\psi_2\delta_{1,1}}+A_{\psi_1\delta_{1,2}}+A_{\psi_2\delta_{1,2}}= (d^2-1)^2.
\]

\item \label{2} Consider a general four-pointed rational curve, and attach elliptic tails varying in pencils of degree $12$ at two of the marked points.

\begin{figure}[htbp]
\centering
  \def\svgwidth{0.5\columnwidth}
  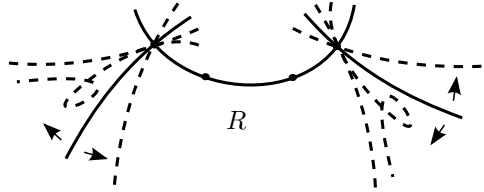
  \caption{How the general fiber of the family (3) moves.}
\end{figure}

To construct an elliptic pencil of degree $12$, blow up $\mathbb{P}^2$ in the nine points of intersection of two general cubics. Let $Y_1\rightarrow \mathbb{P}^1$ and $Y_2\rightarrow \mathbb{P}^1$ be two such elliptic pencils with zero sections $\sigma_1$ and $\sigma_2$. Let $R$ be a rational curve, and choose four general sections of $R\times\mathbb{P}^1\times\mathbb{P}^1\rightarrow \mathbb{P}^1\times\mathbb{P}^1$. Consider $Y_1\times \mathbb{P}^1$ and $\mathbb{P}^1\times Y_2$, and identify $\sigma_1\times\mathbb{P}^1$ and $\mathbb{P}^1\times\sigma_2$ with two of the sections of $R\times\mathbb{P}^1\times\mathbb{P}^1\rightarrow \mathbb{P}^1\times\mathbb{P}^1$. 

The base of the family is $\mathbb{P}^1\times\mathbb{P}^1$. Let $\pi_i\colon \mathbb{P}^1\times\mathbb{P}^1\rightarrow\mathbb{P}^1$ be the projection on the $i$-th factor, for $i=1,2$, and let $x$ be the class of a point in $\mathbb{P}^1$. The divisor classes restrict as follows
\begin{eqnarray*}
\delta_{1,2} &=& c_1\left(N_{\sigma_1/Y_1} \right) + c_1\left(N_{\sigma_2/Y_2} \right) ={}-\pi_1^*(x)-\pi_2^*(x),\\
\delta_{0} &=& {}-12 c_1\left(N_{\sigma_1/Y_1} \right) -12 c_1\left(N_{\sigma_2/Y_2} \right) =12\pi_1^*(x)+12\pi_2^*(x),
\end{eqnarray*}
and we have
\begin{align*}
\delta_0^2 &= 288, & \delta_0\delta_{1,2} &= -24.
\end{align*}
For every fiber, the two marked points are in the same rational component. An admissible cover for a fiber of this family is ramified with order at least $2$ at the two singular points. By the Riemann-Hurwitz formula, no fiber of this family admits an admissible cover totally ramified at the two marked points. We deduce the following relation
\[
288 A_{\delta_0^2} -24A_{\delta_0\delta_{1,2}} = 0.
\]

\item We consider a chain of three curves: a central rational curve with two elliptic tails. We fix a first marked point on the rational component, we vary one elliptic tail in a pencil of degree $12$, and we consider a moving second marked point on the other elliptic tail.

\begin{figure}[htbp]
\centering
  \def\svgwidth{0.5\columnwidth}
  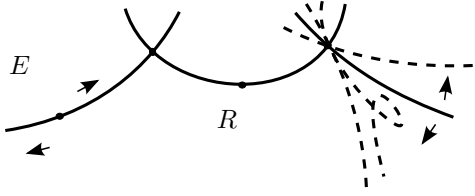
  \caption{How the general fiber of the family (4) moves.}
\end{figure}

Let $(E,p)$ be a pointed elliptic curve. Let $\widetilde{E\times E}$, $\Gamma$, and $\sigma_{\Delta}$ as in (\ref{1}). Let $Y\rightarrow \mathbb{P}^1$ be an elliptic pencil of degree $12$ as in (\ref{2}), with zero section $\sigma$. The base of the family is $E\times \mathbb{P}^1$. Consider $\widetilde{E\times E}\times \mathbb{P}^1$, $E\times Y$, and the trivial family $R\times E\times \mathbb{P}^1$ with three general sections $\tau_1$, $\tau_2$, $\tau_3$ for a rational curve $R$. Finally, identify the section $\tau_2$ with $\Gamma\times\mathbb{P}^1$, and the section $\tau_3$ with $E\times\sigma$. The sections  $\tau_1$ and $\sigma_{\Delta}$ give rise to two sections of the family over $E\times\mathbb{P}^1$. We let $\tau_1$ be the section corresponding to the first marked point, and $\sigma_{\Delta}$ the one corresponding to the second marked point.

Let $x$ be the class of a point in $\mathbb{P}^1$. The divisor classes restrict as follows
\begin{eqnarray*}
\psi_2 &=& -c_1\left(N_{\sigma_\Delta/\widetilde{E\times E}} \right)=\pi_1^*[p],\\
\delta_{1,1} &=& c_1\left( N_{\Gamma/\widetilde{E\times E}} \right) = -\pi_1^*[p],\\
\delta_{1,2} &=& \pi_1^*[p] +c_1\left( N_{\sigma/Y} \right) =\pi_1^*[p]-\pi_2^*(x),\\
\delta_0 &=& -12 c_1\left( N_{\sigma/Y} \right) = 12\pi_2^*(x),
\end{eqnarray*}
and we have
\begin{align*}
\psi_2\delta_{1,2} &= -1, & \psi_2\delta_0 =  \delta_0\delta_{1,2} &= 12, & \delta_0\delta_{1,1} &= -12.
\end{align*}
Suppose there exists an admissible cover for a fiber of this family totally ramified at the two marked points. Then, the restriction of the cover to the central rational component is totally ramified at the marked point and at one of the singular points, plus it has at least a simple ramification at the other singular point, a contradiction. 
Hence, this family has empty intersection with $\overline{\mathcal{DR}}_2(d)$, and we obtain
\[
{}-A_{\psi_2\delta_{1,2}}+12A_{\psi_2\delta_0} -12A_{\delta_0\delta_{1,1}} +12A_{\delta_0\delta_{1,2}}=0.
\]

\item \label{11} Pick a general element in the divisor $\Delta_{1,1}$, and consider the surface obtained by varying the first marked point and the $j$-invariant of the elliptic tail on which it lies.

\begin{figure}[htbp]
\centering
  \def\svgwidth{0.5\columnwidth}
  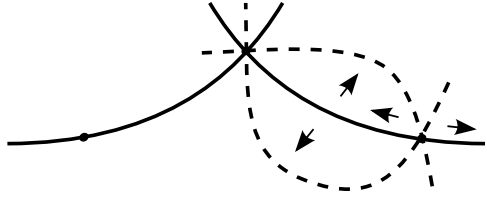
  \caption{How the general fiber of the family (5) moves.}
\end{figure}

The base of the family is the blow-up of $\mathbb{P}^2$ in the nine points of intersection of two general cubics. Let $H$ be the pullback of the hyperplane class in $\mathbb{P}^2$, let $\Sigma$ denote the sum of the classes of the nine exceptional divisors, and let $E_0$ be the class of one of the exceptional divisors.

The divisor classes of $\Mbar_{2,2}$ restrict as follows
\begin{eqnarray*}
\psi_1 &=& 3H-\Sigma+E_0,\\
\delta_{1,1} &=& {}-3H+\Sigma-E_0,\\
\delta_{1,2} &=& E_0,\\
\delta_0 &=& 36H-12\Sigma,
\end{eqnarray*}
and we obtain
\begin{align*}
\psi_1^2 &= 1, & \psi_1\delta_{1,1} &=-1, & \psi_1\delta_0 = \delta_0\delta_{1,2} &=12, & \delta_0\delta_{1,1} &= -12.
\end{align*}
Since the second marked point is general, the surface is disjoint from the locus $\overline{\mathcal{DR}}_2(d)$, hence we deduce the following relation
\[
 A_{\psi_1^2+\psi_2^2} - A_{\psi_1\delta_{1,1}} +12A_{\psi_1\delta_0}-12A_{\delta_0\delta_{1,1}}+12A_{\delta_0\delta_{1,2}}=0.
\]

\item Choose a general element in 
the locus of curves with a rational and an elliptic component meeting at two non-disconnecting nodes, with both marked points on the rational component, 
and consider the surface obtained by varying the two moduli of the two-pointed elliptic component.

\begin{figure}[htbp]
\centering
  \def\svgwidth{0.3\columnwidth}
  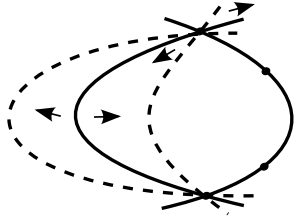
  \caption{How the general fiber of the family (6) moves.}
\end{figure}

The base of this family is the same surface as that of the family (\ref{11}). The restriction of the divisor classes are
\begin{eqnarray*}
 \delta_{1,2} &=& E_0,\\
\delta_0 &=& 30 H-10\Sigma-2E_0,
\end{eqnarray*}
(see also \cite[\S 2 (9)]{MR1023390}), and we have
\begin{align*}
\delta_0\delta_{1,2} &=12,  & \delta_0^2 &= -44.
\end{align*}
Since we can take the two singular points to be general in the rational component, this family is disjoint from $\overline{\mathcal{DR}}_2(d)$. We deduce the following relation
\[
 12A_{\delta_0\delta_{1,2}} -44 A_{\delta_0^2}=0.
\]

\item \label{8} Pick a general element in the 
intersection of the two divisors $\delta_{1,2}$ and $\delta_2$, 
and vary the two moduli of the central two-pointed elliptic component.
 Once again, the base of this family is the same surface as that of the family (\ref{11}).
 
 \begin{figure}[htbp]
\centering
  \def\svgwidth{0.5\columnwidth}
  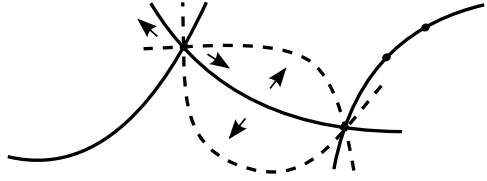
  \caption{How the general fiber of the family (7) moves.}
\end{figure}

Suppose there exists an admissible cover for a fiber of this family, totally ramified at the two marked points. Then, the restriction of the cover to the rational component is totally ramified at the two marked points and has at least a simple ramification at the singular point, a contradiction.
 It follows that the family is disjoint from $\overline{\mathcal{DR}}_2(d)$. The divisor classes restrict as follows
\begin{eqnarray*}
 \delta_2 &=& {}-3H +\Sigma -E_0,\\
 \delta_{1,2} &=& {}-3H +\Sigma, \\
 \delta_0 &=& 36 H-12\Sigma
\end{eqnarray*}
(see also \cite[\S 3 ($\lambda$)]{MR1078265}). Hence, we obtain
\begin{align*}
\delta_2^2 = \delta_{1,2}\delta_2 &=1, & \delta_0\delta_2 &=-12,
\end{align*}
and
\[
 A_{\delta_2^2}+A_{\delta_{1,2}\delta_2} -12A_{\delta_0\delta_2}=0.
\]

\item Let $(E,p)$ be a pointed elliptic curve. Attach a rational tail with the two marked points at $p$. Identify a moving point in $E$ with a base point of a pencil of elliptic curves of degree $12$.

\begin{figure}[htbp]
\centering
  \def\svgwidth{0.5\columnwidth}
  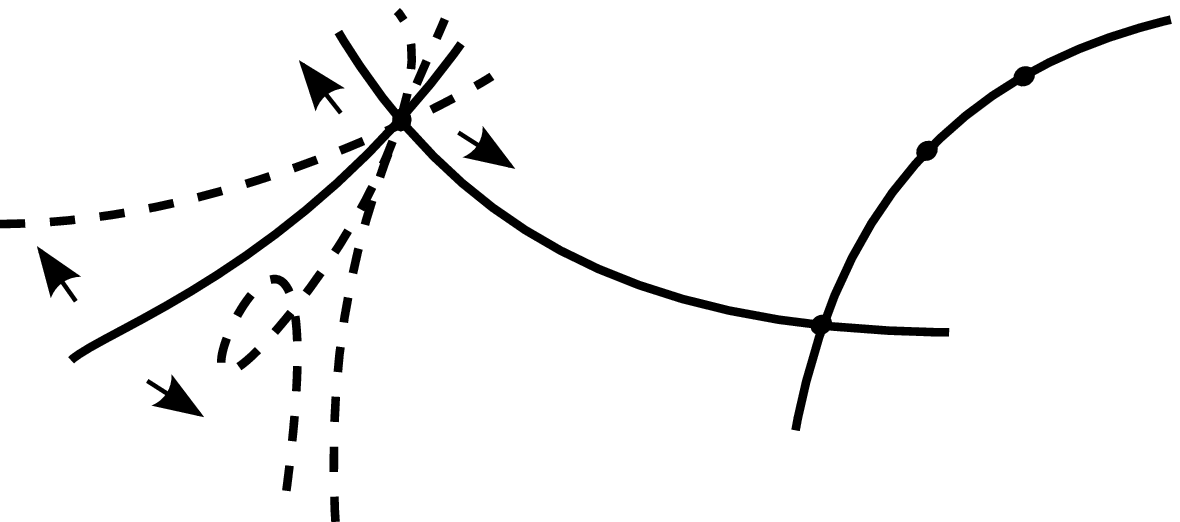
  \caption{How the general fiber of the family (8) moves.}
\end{figure}

Let $Y\rightarrow \mathbb{P}^1$ be an elliptic pencil of degree $12$ as in (\ref{2}), with zero section $\sigma$.
Let $\widetilde{E\times E}$, $\Gamma$, and $\sigma_{\Delta}$ as in (\ref{1}). 
The base of this family is $\mathbb{P}^1\times E$. 
Let $x$ be the class of a point in $\mathbb{P}^1$.
The divisor classes are
\begin{eqnarray*}
 \delta_2 &=& c_1\left(N_{\Gamma/\widetilde{E\times E}} \right)= -\pi_2^*[p],\\
\delta_{1,2} &=& c_1\left(N_{\sigma/Y}\otimes N_{\sigma_{\Delta}/\widetilde{E\times E}} \right) + \Gamma = -\pi_1^*(x),\\
\delta_0 &=& 12 \pi_1^*(x).
\end{eqnarray*}
It follows that
\begin{align*}
\delta_{1,2}\delta_2 &= 1, & \delta_0\delta_2 &= -12.
\end{align*}
As for the family (\ref{8}), this family does not meet $\overline{\mathcal{DR}}_2(d)$, hence we obtain
\[
 A_{\delta_{1,2}\delta_2}-12A_{\delta_0\delta_2}=0.
\]

\item Consider a $4$-pointed rational curve $(R,p_1,p_2,s,t)$. Identify the point $s$ with the base point of an elliptic pencil, and identify the point $t$ with a moving point in $R$.

\begin{figure}[htbp]
\centering
  \def\svgwidth{0.5\columnwidth}
  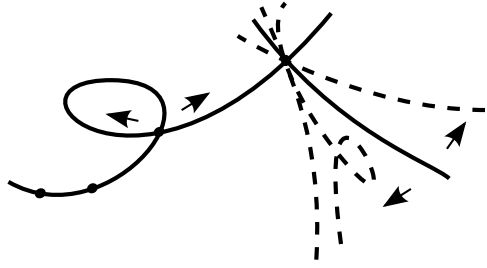
  \caption{How the general fiber of the family (9) moves.}
\end{figure}

The base of the family is $R\times \mathbb{P}^1$. To construct the surface, let $\widetilde{R\times R}$ be the blow-up of $R\times R$ at the points $(p_1,p_1)$, $(p_2,p_2)$, $(s,s)$, $(t,t)$. Let $E_{p_1}$, $E_{p_2}$, $E_s$, $E_t$, and $E_\Delta$ be the proper transform respectively of $\{p_1\}\times R$, $\{p_2\}\times R$, $\{s\}\times R$, $\{t\}\times R$, and $\Delta\subset R\times R$. Let $Y\rightarrow\mathbb{P}^1$ be as in the family (\ref{1}), and $\sigma$ be the zero section. Consider $\widetilde{R\times R}\times \mathbb{P}^1$ and $R\times Y$, and identify  $E_s\times\mathbb{P}^1$ with $R\times\sigma$, and $E_\Delta\times\mathbb{P}^1$ with $E_t\times \mathbb{P}^1$.

By an argument similar to the one used for the family (\ref{8}), this family does not meet the locus $\overline{\mathcal{DR}}_2(d)$. The divisor classes restrict as follows
\begin{eqnarray*}
 \psi_1 &=& -c_1\left(N_{E_{p_1}/\widetilde{R\times R}} \right) = \pi_1^*[p_1],\\
 \psi_2 &=& -c_1\left(N_{E_{p_2}/\widetilde{R\times R}} \right) = \pi_1^*[p_2],\\
 \delta_{1,2} &=& E_t + c_1\left(N_{E_s/\widetilde{R\times R}} \otimes N_{\sigma/Y} \right)= \pi_1^*(t) -\pi_1^*(s) -\pi_2^*(x),\\
 \delta_0 &=& c_1\left(N_{E_t/\widetilde{R\times R}}\otimes N_{E_\Delta/\widetilde{R\times R}} \right)+E_{p_1}+E_{p_2}+E_s -12 c_1\left(N_{\sigma/Y} \right)= 12\pi_2^*(x).
\end{eqnarray*}
We deduce that
\begin{align*}
\psi_1\delta_{1,2} = \psi_2\delta_{1,2} &= -1, & \psi_1\delta_0 = \psi_2\delta_0 &=12,
\end{align*}
and
\[
 {}-A_{\psi_1\delta_{1,2}}+12A_{\psi_1\delta_0}-A_{\psi_2\delta_{1,2}}+12A_{\psi_2\delta_0}=0.
\]

\item Let $(R, q_1, q_2, q_3, q_4, q_5)$ be a $5$-pointed rational curve. Attach a rational curve with two marked points at $q_1$. Finally, identify $q_2$ with $q_3$, and $q_4$ with $q_5$. Consider the surface obtained by varying the two moduli of the $5$-pointed rational curve.

\begin{figure}[htbp]
\centering
  \def\svgwidth{0.5\columnwidth}
  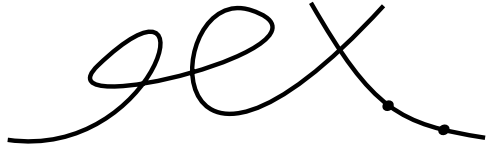
  \caption{The general fiber of the family (10).}
\end{figure}

The base of this surface is $\Mbar_{0,5}$. 
For $i,j\in\{1,2,3,4,5\}$, let $D_{i,j}$ be the class of the divisor in $\Mbar_{0,5}$ corresponding to singular rational curves with two components, the markings $i,j$ in one component and the other three markings in the other component. Note that $D_{i,j}^2=-1$, and $D_{i,j}\cdot D_{k,l}=1$ if $\{i,j\}\cap\{k,l\}=\emptyset$, otherwise $D_{i,j}\cdot D_{k,l}=0$.

Let $\widetilde{\psi}_i$ be the cotangent line class in $\Mbar_{0,5}$ corresponding to the marking $i$. If we fix two markings $j,k$ different from $i$, and if $l,m$ are such that $\{1,2,3,4,5\}=\{i,j,k,l,m\}$, then note that we can write
\[
 \widetilde{\psi}_i = D_{j,k}+D_{i,l}+D_{i,m}.
\]
The restrictions of the divisor classes in $\Mbar_{2,2}$ to this family are
\begin{eqnarray*}
 \delta_2 &=& -\widetilde{\psi}_1\\
	&=& {}-D_{2,3}-D_{1,5}-D_{1,4},\\
 \delta_{1,2} &=& D_{4,5}+D_{2,3},\\
 \delta_0 &=& {}-\widetilde{\psi}_2-\widetilde{\psi}_3-\widetilde{\psi}_4-\widetilde{\psi}_5\\
&& {}+D_{1,2}+D_{1,3}+D_{1,4}+D_{1,5}\\
&& {}+D_{2,4}+D_{2,5}+D_{3,4}+D_{3,5}\\
&=& {}-2D_{4,5}-D_{2,3}-D_{3,4}-D_{1,2}+D_{1,4}.
\end{eqnarray*}
It follows that
\begin{align*}
\delta_2^2 &=1, & \delta_{1,2}\delta_2 &= -2, & \delta_0\delta_2 &= 4.
\end{align*}
As for the family (\ref{8}), this family is disjoint from the locus $\overline{\mathcal{DR}}_2(d)$, and we obtain
\[
 A_{\delta_2^2}-2A_{\delta_{1,2}\delta_2}+4A_{\delta_0\delta_2}=0.
\]

\end{enumerate} }

\subsection{Symmetry}
\label{symmetry} 
The locus $\mathcal{DR}_2(d)$ is symmetric in the two marked points. We deduce that, for a choice of basis of $R^2(\Mbar_{2,2})$ symmetric with respect to the classes $\psi_1$, $\psi_2$, the expression of the class of $\overline{\mathcal{DR}_2(d)}$ is symmetric in $\psi_1$, $\psi_2$. Hence, we have the following three relations
\begin{eqnarray*}
A_{\psi_1\delta_{1,1}}&=&A_{\psi_2\delta_{1,1}},\\
A_{\psi_1\delta_{1,2}}&=&A_{\psi_2\delta_{1,2}},\\
A_{\psi_1\delta_{0}}&=&A_{\psi_2\delta_{0}}.
\end{eqnarray*}

\subsection{The push-forward to $\Mbar_{2,1}$}
\label{push}
  It is straightforward to compute the pushforward of products of divisor classes via the maps $\pi_i$: when $i=1$, we have
\begin{align*}
 &(\pi_1)_*\left(\psi_1^2 \right) = \kappa_1 = \psi+\frac{1}{5}\delta_0 +\frac{7}{5}\delta_1, &&(\pi_1)_*\left(\psi_2^2 \right) = \psi, && (\pi_1)_*\left(\psi_1\psi_2 \right) = 3\psi,\\
&(\pi_1)_*\left(\psi_1\delta_{1,1} \right) = \delta_1, &&(\pi_1)_*\left(\psi_1\delta_{1,2} \right) = 2\delta_1, && (\pi_1)_*\left(\psi_2\delta_{1,2} \right) = \delta_1,\\
&(\pi_1)_*\left(\psi_1\delta_0 \right) = 3\delta_0, &&(\pi_1)_*\left(\psi_2\delta_0 \right) = \delta_0, && (\pi_1)_*\left(\delta_2^2 \right) = -\psi,\\
&(\pi_1)_*\left(\delta_{1,2}\delta_2 \right) = \delta_1, &&(\pi_1)_*\left(\delta_0\delta_2 \right) = \delta_0,
\end{align*}
and all other products have zero pushforward. It follows that the pushforward of the expression in (\ref{basis}) is
\begin{eqnarray*}
 (\pi_1)_*\left[ \overline{\mathcal{DR}}_2(d) \right] &=&  \left(3A_{\psi_1\psi_2}+2A_{\psi_1^2+\psi_2^2}-A_{\delta_2^2} \right)\psi \\
   && {}+ \left(\frac{1}{5}A_{\psi_1^2+\psi_2^2} +3A_{\psi_1\delta_0}+A_{\psi_2\delta_0}+A_{\delta_0\delta_2} \right) \delta_0 \\
   && {}+ \left(\frac{7}{5}A_{\psi_1^2+\psi_2^2}+A_{\psi_1\delta_{1,1}}+2A_{\psi_1\delta_{1,2}}+A_{\psi_2\delta_{1,2}}+A_{\delta_{1,2}\delta_2} \right) \delta_1.
\end{eqnarray*}
Comparing with Corollary \ref{pfwdclass}, we deduce the following three relations
\begin{eqnarray*}
 3A_{\psi_1\psi_2}+2A_{\psi_1^2+\psi_2^2}-A_{\delta_2^2} &=& d^4-1,\\
\frac{1}{5}A_{\psi_1^2+\psi_2^2} +3A_{\psi_1\delta_0}+A_{\psi_2\delta_0}+A_{\delta_0\delta_2} &=& -\frac{(d^2-1)(d^2+6)}{60},\\
\frac{7}{5}A_{\psi_1^2+\psi_2^2}+A_{\psi_1\delta_{1,1}}+2A_{\psi_1\delta_{1,2}}+A_{\psi_2\delta_{1,2}}+A_{\delta_{1,2}\delta_2} &=& -\frac{(d^2-1)(d^2+6)}{5}.
\end{eqnarray*}

\subsection{Proof of Theorem \ref{ACC}}
In order to determine the class of $\overline{\mathcal{DR}}_2(d)$, we need to determine the coefficients $A$ in formula (\ref{basis}). In \S\ref{testsurfaces} we have found $10$ relations using test surfaces. In \S\ref{symmetry} we have $3$ relations coming from the symmetry of  $\overline{\mathcal{DR}}_2(d)$ in the two marked points. Finally, in \S\ref{push} we have deduced $3$ more relations from the study of the push-forward of $\overline{\mathcal{DR}}_2(d)$ to $\Mbar_{2,1}$. In total, we have a system of $16$ linear relations in the coefficients $A$. The associated matrix has rank $14$, and, solving the system, we prove the statement. The linear system is consistent, hence there are two  redundant relations which serve as checks on our computation. \hfill$\Box$

\subsection{Test}
Although classes of closures of double ramification cycles are not known in general, in \cite{BSSZ} all zero-dimensional intersections in $\Mbar_{g,n}$ of double ramification cycles with monomials in $\psi$-classes have been computed. It is interesting to note that the authors show that for $\underline{d}=(d_1,\dots,d_n)$ with all $d_i$ non-zero, the closures of double ramification loci $\mathcal{DR}_g(\underline{d})$ by means of admissible covers and the push-forward of the virtual fundamental class of the space of relative stable maps to an unparametrized $\mathbb{P}^1$ have same intersections with monomials in $\psi$-classes.

For instance, they show that
\begin{eqnarray}
 \label{psiint}
 \left[\overline{\mathcal{DR}}_2(d) \right]\cdot \psi_1^3 = \left[\overline{\mathcal{DR}}_2(d) \right]\cdot \psi_2^3 = \frac{(d^2-1)(3d^2-7)}{5760}. 
\end{eqnarray}
Using the explicit espression of the class of $\overline{\mathcal{DR}}_2(d)$ from Theorem \ref{ACC}, we verify the intersections in (\ref{psiint}) as a 
check of our computation:
\begin{eqnarray}
 \left[\overline{\mathcal{DR}}_2(d) \right]\cdot \psi_1^3 &=& \left[\overline{\mathcal{DR}}_2(d) \right]\cdot (\pi_2^*\psi +\delta_2)^3 \nonumber\\
	&=& \left[\overline{\mathcal{DR}}_2(d) \right]\cdot \pi_2^*\psi^3 \label{intdelta2}\\
	&=& (\pi_2)_*\left[\overline{\mathcal{DR}}_2(d) \right]\cdot \psi^3 \nonumber\\
	&=& (d^2-1)\left((d^2+1)\psi-\frac{d^2+6}{5}\left(\frac{1}{12}\delta_0+\delta_1\right) \right)\cdot\psi^3 \nonumber\\
	&=& \frac{(d^2-1)(3d^2-7)}{5760}. \label{Faberint}
\end{eqnarray}
Note that in (\ref{intdelta2}) we have used that $\psi_i\cdot\delta_2=0$ in $\Mbar_{2,2}$, and in (\ref{Faberint}) we have used the following intersections in $\Mbar_{2,1}$
\begin{eqnarray*}
 \psi^4 = \frac{1}{1152},\quad\quad \psi^3\delta_0 = \frac{1}{48},\quad\quad \psi^3\delta_1 = 0
\end{eqnarray*}
computed in \cite[Chapter 3]{faberthesis}.

\section{Comparing with the pull-back of the zero section}
\label{Hain}

The class in $A^g(\M_{g,n}^{ct})$ of the pull-back of the zero section ${\mathcal{Z}}^{ct}_g$ of the universal Jacobian variety $\mathcal{J}^{ct}_{g,n}\rightarrow\M_{g,n}^{ct}$ has been computed in \cite{Hain} and \cite{GZ1}. The space $R^2(\mathcal{M}^{ct}_{2,2})$ has dimension $5$. We can consider the basis formed by degree-two monomials in the divisor classes $\psi_1, \psi_2, \delta_{1,1}, \delta_{1,2}, \delta_2$ modulo the restrictions to $\M_{2,2}^{ct}$ of the relations (\ref{relations}) in $\Mbar_{2,2}$ and the following additional relations that hold in $\M_{2,2}^{ct}$
\begin{eqnarray}
\label{relations2}
 \psi_1\psi_2 = \frac{3}{2}\left(\psi_1^2 + \psi_2^2\right) -\frac{9}{10} \left(\psi_1 + \psi_2 \right)\delta_{1,1} -\frac{2}{5} \left(\psi_1 + \psi_2\right)\delta_{1,2} \nonumber\\
 \psi_i^2 = \frac{7}{10} \left(\psi_i \left(\delta_{1,1}+\delta_{1,2} \right) - \delta_{1,2}\delta_2\right) -\delta_2^2\\
 \left(\psi_1-\psi_2\right) \delta_{1,1} = \left(\psi_1-\psi_2\right) \delta_{1,2}.\nonumber
\end{eqnarray}
To prove the above relations, it is enough to express the product of divisor classes in terms of decorated boundary strata classes as in \cite{MR1672112}.
To have a basis symmetric with respect to the two marked points, we choose the basis given by the following classes:
\[
 \left(\psi_1+\psi_2\right)\delta_{1,1},\quad \psi_1\delta_{1,2},\quad \psi_2\delta_{1,2},\quad \delta_2^2,\quad \delta_{1,2}\delta_2.
\]

\begin{proof}[Proof of Corollary \ref{Hain-AC}]
From \cite{Hain} and \cite{GZ1} the class of $\varphi^*_{(d,-d)}{\mathcal{Z}}^{ct}_2$ in $A^2(\M_{2,2}^{ct})$ can be expressed as follows
\[
 \left[\varphi^*_{(d,-d)}{\mathcal{Z}}^{ct}_2\right] = \frac{1}{2}\left(\frac{d^2}{2}\left( (\psi_1-\delta_2)+(\psi_2-\delta_2) \right)+d^2\delta_2-\frac{d^2}{2}\delta_{1,1} \right)^2.
\]
Using the relations in (\ref{relations}) and (\ref{relations2}), we have
\[
 \left[\varphi^*_{(d,-d)}{\mathcal{Z}}^{ct}_2\right] = d^4 \left(\frac{1}{4}\left(\psi_1+\psi_2\right)\left(\delta_{12}-\delta_{11}\right)-\delta_2^2-\frac{7}{10}\delta_{12}\delta_2 \right),
\]
and the restriction of the class of $\overline{\mathcal{DR}}_2(d)$ from $\Mbar_{2,2}$ is
\[
 \left[\overline{\mathcal{DR}}_2(d)\right]= (d^2-1)\left(\frac{d^2-1}{4}\left(\psi_1+\psi_2 \right)\left(\delta_{1,2}-\delta_{1,1} \right)-(d^2+1)\delta_2\left(\delta_2+\frac{7}{10}\delta_{1,2}\right) \right).
\]
It follows that
\[
 \left[\varphi^*_{(d,-d)}{\mathcal{Z}}^{ct}_2\right] - \left[\overline{\mathcal{DR}}_2(d)\right] = \frac{2d^2-1}{4}\left(\psi_1+\psi_2 \right)\left(\delta_{1,2}-\delta_{1,1} \right)-\delta_2\left(\delta_2+\frac{7}{10}\delta_{1,2}\right).
\]
In \cite{MR1672112} Getzler describes also a basis for $R^2(\Mbar_{2,2})$ made of decorated boundary strata classes and explains the change of basis. 
In terms of decorated boundary strata classes, using the notation in \cite{MR1672112} (see the introduction), we have
\begin{eqnarray}
\label{DRct}
 \left[\varphi^*_{(d,-d)}{\mathcal{Z}}^{ct}_2\right] &=& d^4 \left(\delta_{22}+\delta_{11|}-\frac{1}{5}\delta_{11|12} \right)\nonumber\\
 \left[\overline{\mathcal{DR}}_2(d)\right] &=& (d^2-1)\left((d^2+1)\delta_{22}+(d^2-1)\delta_{11|}-\frac{d^2+6}{5}\delta_{11|12} \right)
\end{eqnarray}
in $R^2(\M_{2,2}^{ct})$, hence the statement. 
\end{proof}

\section{In the cone of effective classes in codimension two}
\label{codim2cone}

In this section we study the position of the classes of the loci $\overline{\mathcal{DR}}_2(d)$ inside the cone in $A^2(\Mbar_{2,2})$ spanned by classes of effective codimension-two loci in $\Mbar_{2,2}$.

The first result is Corollary \ref{CI}, that is, the classes of the loci $\overline{\mathcal{DR}}_2(d)$ are outside the cone of classes of complete intersections.

\begin{proof}[Proof of Corollary \ref{CI}]
Suppose that the class of $\overline{\mathcal{DR}}_2(d)$ is $\left[\overline{D}_1 \right]\cdot \left[\overline{D}_2 \right]$, where $D_1, D_2$ are two effective divisors in $\M_{2,2}$. For $i=1,2$, the class $\left[\overline{D}_i \right]$ can be expressed as
\[
 \left[\overline{D}_i \right] = c^{(i)}_{\psi_1} \psi_1 + c^{(i)}_{\psi_2} \psi_2 - c^{(i)}_{\delta_0} \delta_0 - c^{(i)}_{\delta_2}\delta_2 - c^{(i)}_{\delta_{1,1}}\delta_{1,1} - c^{(i)}_{\delta_{1,2}}\delta_{1,2} \in
\textrm{Pic}(\Mbar_{2,2}),
\]
where the coefficients $c^{(i)}$ are non-negative. Using the relations (\ref{relations}), the product $\left[\overline{D}_1 \right]\cdot \left[\overline{D}_2 \right]$ can be expressed in terms of the basis chosen in (\ref{basis}). The coefficient of the class $\psi_1^2+\psi_2^2$ is 
\[
\frac{1}{2}\left(c^{(1)}_{\psi_1}c^{(2)}_{\psi_1}+c^{(1)}_{\psi_2}c^{(2)}_{\psi_2} \right)
\]
hence non-negative, a contradiction.
\end{proof}

An immediate consequence of Theorem \ref{ACC} is Corollary \ref{cone}, that is, the classes of the loci $\overline{\mathcal{DR}}_2(d)$ lie in the two-dimensional cone spanned by the classes 
$\left[\overline{\mathcal{DR}}_2(2)\right]$ and $\left[\overline{\mathcal{DR}}_2(\infty)\right]$.

\begin{proof}[Proof of Corollary \ref{cone}]
It is easy to verify that
\begin{eqnarray}
\label{2dimcone}
 \left[\overline{\mathcal{DR}}_2(d)\right] = (d^2-1)\left(\frac{1}{3}\cdot\left[\overline{\mathcal{DR}}_2(2)\right] + (d^2-4)\cdot\left[\overline{\mathcal{DR}}_2(\infty)\right] \right),
\end{eqnarray}
hence the statement.
\end{proof}

In \S\ref{div} we have studied the cone of the classes $(\pi_i)_*\left(\overline{\mathcal{DR}}_2(d)\right)$ in $\Mbar_{2,1}$. In particular, we have seen that the class of the push-forward of $\overline{\mathcal{DR}}_2(2)$ lies in an extremal ray of the cone of effective divisor classes of $\Mbar_{2,1}$, while for $d\geq 3$ the class of the push-forward of $\overline{\mathcal{DR}}_2(d)$ lies in the interior of the cone of effective divisor classes.
It is natural to ask whether the class $\overline{\mathcal{DR}}_2(2)$ lie in an extremal ray of the cone of effective codimension-two classes of $\Mbar_{2,2}$. Here we prove that all classes $\overline{\mathcal{DR}}_2(d)$ with $d\geq 3$ are not extremal.
From Corollary \ref{cone}, it is enough to show that the class $\left[\overline{\mathcal{DR}}_2(\infty)\right]$ is not extremal.

Besides the basis of $R^2(\Mbar_{2,2})$ given by the products of divisor classes in (\ref{basis}), in \cite{MR1672112} Getzler describes also a basis of $R^2(\Mbar_{2,2})$ made of the decorated class $\delta_{22}$ (defined after Corollary \ref{Hain-AC}) together with boundary strata classes, and explains the change of basis. 
From (\ref{DRct}), we see that the coefficient of $\delta_{22}$ in the expression of $\left[\overline{\mathcal{DR}}_2(2)\right]$ in terms of decorated boundary strata classes is non-zero. Hence, we can consider the basis of $R^2(\Mbar_{2,2})$ made of the class $\left[\overline{\mathcal{DR}}_2(2)\right]$
together with boundary strata classes. The expression of the class $\left[\overline{\mathcal{DR}}_2(\infty)\right]$ in terms of such a basis is
\begin{eqnarray*}
\left[\overline{\mathcal{DR}}_2(\infty)\right] = \frac{1}{5} \delta_{1,2}\delta_2 + \frac{1}{60}\delta_0\delta_2 +\frac{2}{5} \delta_{11|} +\frac{1}{30} (\delta_{01|} + \delta_{0|}) + \frac{1}{360} \delta_{00} + \frac{1}{15} \left[\overline{\mathcal{DR}}_2(2)\right].
\end{eqnarray*}
Note that the product $\delta_{1,2}\delta_2$ corresponds to the class of the locus of curves with a central elliptic component, an elliptic tail, and a rational tail with both marked points. The product $\delta_{0}\delta_2$ is the class of the locus of curves with a nodal component of geometric genus $1$ attached at a rational component with the two marked points. The class $\delta_{11|}$ is defined immediately after Corollary \ref{Hain-AC}. 
Moreover, $\delta_{01|}$ is the class of the locus of curves obtained by attaching an elliptic tail at a rational curve with two marked points and an irreducible node; $\delta_{0|}$ is the class of the locus of curves with an elliptic component meeting a rational component in two points, and both marked points on the rational component; finally, $\delta_{00}$ is the class of the locus of curves with two non-disconnecting nodes.

The above identity expresses the class $\left[\overline{\mathcal{DR}}_2(\infty)\right]$ as a linear combination with positive coefficients of the effective codimension-two classes on the right-hand side.
This proves the following statement.

\begin{prop}
For $d\geq 3$, the class $\left[\overline{\mathcal{DR}}_2(d)\right]$ is not extremal in the cone of effective codimension-two classes of $\Mbar_{2,2}$.
\end{prop}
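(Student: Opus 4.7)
The plan is to exhibit $\left[\overline{\mathcal{DR}}_2(d)\right]$ as a positive linear combination of effective codimension-two classes that are not all proportional to $\left[\overline{\mathcal{DR}}_2(d)\right]$. By Corollary~\ref{cone}, for $d\geq 3$ we have
\[
\left[\overline{\mathcal{DR}}_2(d)\right] = \frac{d^2-1}{3}\left[\overline{\mathcal{DR}}_2(2)\right] + (d^2-1)(d^2-4)\left[\overline{\mathcal{DR}}_2(\infty)\right],
\]
with both coefficients strictly positive. It therefore suffices to produce such a decomposition for $\left[\overline{\mathcal{DR}}_2(\infty)\right]$, because any decomposition of $\left[\overline{\mathcal{DR}}_2(\infty)\right]$ into non-proportional effective summands propagates to one for $\left[\overline{\mathcal{DR}}_2(d)\right]$.

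The key step is a change of basis. Getzler's basis of $R^2(\Mbar_{2,2})$ consisting of decorated boundary strata classes contains the class $\delta_{22}$, and from (\ref{DRct}) the coefficient of $\delta_{22}$ in $\left[\overline{\mathcal{DR}}_2(2)\right]$ is non-zero. Hence I can replace $\delta_{22}$ by $\left[\overline{\mathcal{DR}}_2(2)\right]$ and obtain a new basis of $R^2(\Mbar_{2,2})$ whose remaining elements are boundary strata classes such as $\delta_{1,2}\delta_2$, $\delta_0\delta_2$, $\delta_{11|}$, $\delta_{01|}$, $\delta_{0|}$, $\delta_{00}$. Expressing $\left[\overline{\mathcal{DR}}_2(\infty)\right]$ from Theorem~\ref{ACC} in this new basis (using Getzler's change-of-basis formulas from \cite{MR1672112}) should yield the identity
\[
\left[\overline{\mathcal{DR}}_2(\infty)\right] = \tfrac{1}{5}\delta_{1,2}\delta_2 + \tfrac{1}{60}\delta_0\delta_2 + \tfrac{2}{5}\delta_{11|} + \tfrac{1}{30}(\delta_{01|}+\delta_{0|}) + \tfrac{1}{360}\delta_{00} + \tfrac{1}{15}\left[\overline{\mathcal{DR}}_2(2)\right]
\]
in $R^2(\Mbar_{2,2})$, with every coefficient strictly positive.

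Each term on the right-hand side is the class of an effective cycle: boundary strata are effective tautologically, $\overline{\mathcal{DR}}_2(2)$ is an effective cycle by construction, and all coefficients are positive. At least one of the boundary strata classes, for instance $\delta_{1,2}\delta_2$, is not proportional to $\left[\overline{\mathcal{DR}}_2(d)\right]$: the latter carries a non-zero coefficient on $\psi_1\psi_2$ in the basis (\ref{basis}), while $\delta_{1,2}\delta_2$ is concentrated on the boundary. Substituting the displayed identity into Corollary~\ref{cone} then exhibits $\left[\overline{\mathcal{DR}}_2(d)\right]$ as a positive combination of effective classes spanning at least two distinct rays, so $\left[\overline{\mathcal{DR}}_2(d)\right]$ is not extremal.

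The main obstacle is the change-of-basis bookkeeping: rewriting products of divisor classes in terms of decorated boundary strata via the relations in (\ref{relations}), and then isolating the coefficient of the class $\left[\overline{\mathcal{DR}}_2(2)\right]$ after the substitution $\delta_{22}\leftrightarrow\left[\overline{\mathcal{DR}}_2(2)\right]$. This is a routine but error-prone linear algebra computation; once it is carried out and the positivity of all resulting coefficients is verified, the conclusion follows immediately, since effectivity and non-proportionality are both manifest in the final expression.
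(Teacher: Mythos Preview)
Your proposal is correct and follows essentially the same route as the paper: reduce to showing $\left[\overline{\mathcal{DR}}_2(\infty)\right]$ is not extremal via Corollary~\ref{cone}, then use the substitution $\delta_{22}\leftrightarrow\left[\overline{\mathcal{DR}}_2(2)\right]$ in Getzler's decorated-strata basis to obtain exactly the positive decomposition you display. The only difference is that the paper simply asserts the identity, whereas you flag the change-of-basis computation as the main bookkeeping step and add an explicit remark on non-proportionality; both are minor elaborations rather than a different argument.
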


\appendix

\section{\texorpdfstring{Non-polynomiality for $n\geq 3$}{Non-polynomiality}}

By varying the multi-index $\underline{d}=(d_1,\dots,d_n)\in\mathbb{Z}^n$, 
the class in $A^g(\M_{g,n}^{ct})$ of the pull-back of the zero section ${\mathcal{Z}}_g^{ct}$ of the universal Jacobian $\mathcal{J}^{ct}_g\rightarrow \M_{g,n}^{ct}$ can be viewed as a polynomial in the indices $d_i$ (\cite{Hain}). The push-forward of the virtual fundamental class of the space of relative stable maps to an unparametrized $\mathbb{P}^1$ coincides with the class of the pull-back of ${\mathcal{Z}}_g^{ct}$ in $A^g(\M_{g,n}^{ct})$ (\cite{MR2864866}), and similarly its extension to $A^g(\Mbar_{g,n})$ is conjectured to be a polynomial in $d_i$.

On the other hand, the closures of the double ramification loci computed by means of admissible covers are not expected to be polynomials in the indices $d_i$ (\cite{BSSZ}). In this section, we verify this expectation in the case $g=2$, $n\geq 3$ by the following easy computation.

Consider the following surface in $\Mbar_{2,n}$, for $n\geq 3$. Attach a rational curve containing the marked points $1,\dots,n-1$ to a moving point on a general curve $C$ of genus $2$, and choose the point $n$ to be a moving point in $C$. 
The construction in (\ref{9}) of \S \ref{adm} yields a family over this surface.

Let us study the intersection of this surface with the locus $\overline{\mathcal{DR}}_2(\underline{d})$ in $\Mbar_{2,n}$, for some multi-index $\underline{d}=(d_1,\dots,d_n)$ of degree $0$. If $d_n\not= 0$, the intersection is analogous to the one studied for the family (\ref{9}) in \S \ref{adm}, that is, $2(d_n^2-1)$. On the other hand, the intersection is empty if $d_n=0$. Since the result of this intersection is not a polynomial in $d_n$, we deduce the non-polynomiality of the class of $\overline{\mathcal{DR}}_2(\underline{d})$ in the indices $d_i$.

\bibliographystyle{abbrv}
\bibliography{Biblio.bib}

\end{document}